\newtheorem{theo}{Theorem}[section]
\newtheorem{prop}{Proposition}[section]
\newtheorem{lem}{Lemma}[section]
\newtheorem{rk}{Remark}[section]
\newtheorem{exa}{Example}[section]
\newtheorem{coro}{Corollary}[section]
\numberwithin{equation}{section}
\def\R{\mathbb{R}}
\def\calE{{\cal{E}}}
\def\calF{{\cal{F}}}
\newcommand{\varp}{\varphi}
\newcommand{\lam}{\lambda}
\newcommand{\Lam}{\Lambda}
\newcommand{\Om}{\Omega}
\newcommand{\alp}{\alpha}
\begin{document}
\bibliographystyle{alpha}

\title{Hardy's inequality in the scope of Dirichlet forms}

\author{\normalsize Nedra Belhadjrhouma\footnote {Facult\'e des Sciences de Tunis, Department of Mathematics. Tunisia }
\& Ali BenAmor\footnote{corresponding author} \footnote{Department of Mathematics, Uni.Gafsa, Tunisia. E-mail: ali.benamor@ipeit.rnu.tn} }

\date{}
\maketitle
\begin{abstract} We revisit Hardy's inequality in the scope of regular Dirichlet forms
following an analytical method. We shall give an alternative necessary and sufficient
condition for the occurrence of Hardy's inequality. A special emphasis
will be given for the case where
the Dirichlet form under consideration is strongly local, extending therefore some known results
in the Euclidean case.
\end{abstract}
{\bf Key words}: Hardy's inequality, Dirichlet form, energy measure.
\section{Introduction}
By Hardy's inequality we mean an inequality of the type
\begin{eqnarray}
\int f^2\,d\mu\leq C\calE[f],\ \forall\,f\in D(\calE),
\label{hardy inequality}
\end{eqnarray}
where $\calE$ is a Dirichlet form and $\mu$ a positive measure charging no set having zero capacity.\\
Discussions of such type of inequalities and its consequences in the scope of Dirichlet forms
was made by several authors and the subject has gained much more interest in the last years
\cite{kaimanovich,vondracek,fitz,fuku02,benamor04,rao}, due to their relevance to many areas of
mathematics (spectral theory, PDE's, potential theory,...etc).\\
In the literature there are many type of necessary and sufficient conditions for the validity
of inequality(\ref{hardy inequality}) (especially for the gradient energy form on Euclidean
domains (\cite{adams73,ancona,mazja85,adams,tidblom}): capacitary conditions, functional
conditions...etc\\
In \cite{ancona}, Ancona proved that Hardy's inequality holds true on Euclidean domains $\Om$ for
the measure $({\rm dist}(x,\partial\Om))^{-2}dx$ and where the energy is the gradient energy form
if and only if $\Om$ possesses a 'strong barrier'. Years after Fitzsimmons \cite{fitz}, proved
that this deep result holds true in a very large generality, namely for quasi-regula Dirichlet
form.\\
Being inspired by the papers of Ancona \cite{ancona} and Fitzsimmons \cite{fitz} we shall give new
 necessary and sufficient condition ensuring the validity of Hardy's inequality. In fact, using
 Beurling-Deny formula, we shall write Ancona's condition in a variational form, without assuming
 the barrier to be superharmonic.\\
 We shall also show that our condition is equivalent to the one due Fitzsimmons.\\ In the special
 (but relevant) case where the Dirichlet form is strongly local (of diffusion type), using the
 intrinsic metric induced by $\calE$, we shall generalize and improve the known Hardy \cite{ancona}
 and improved Hardy inequality on bounded Euclidean domains \cite{filippas} in our general setting.\\
 Our method is rather direct and analytic. It is based upon the use of the celebrated Beurling-Deny
 formula\\

 \section{Preliminaries}

We first shortly describe the framework in which we shall state our results.\\
Let $\calE$ be a regular symmetric transient Dirichlet form, with domain $\calF:=D(\calE)$ w.r.t.
the space $L^2:=L^2(X,m)$. We assume that $X$ is a  separable metric space and that
$m$ is a reference measure.\\
In this stage we would like to emphasize that our assumptions on the Dirichlet form are not very restrictive. Indeed, every {\em quasi-regular} Dirichlet form is {\em quasi-homeomorphic} to a {\em regular} Dirichlet form \cite{roeckner-homeo}. So that our results are true for quasi-regular Dirichlet forms as well.\\
The local Dirichlet space related to $\calE$ will be denoted by $\calF_{\rm loc}$.
A function $f$ belongs to $\calF_{\rm loc}$ if for every relatively compact subset $\Om\subset X$
there is $\tilde f\in D(\calE)$ such that $f=\tilde f$-a.e. on $\Om$.\\
We recall the known fact that every element from $\calF_{\rm loc}$ has a quasi-continuous modification. We shall always implicitly assume that elements from $\calF_{\rm loc}$ has been modified so as to become quasi-continuous.\\
We also designate by $\calF_b:=\calF\cap L^{\infty}(X,m)$ and $\calF_{b,\rm loc}:=\calF_{\rm loc}\cap L_{\rm loc}^{\infty}(X,m)$.\\
We shall denote respectively by $\kappa$, $J$ the killing and the jumping measures related to $\calE$ and $\calE^{c}$ its strong part both  given by Beurling-Deny formula (See \cite[Theorem 4.5.2,p.164]{fuku-oshima} (for quasi-regular Dirichlet forms, see \cite{kuwae}).\\

\begin{eqnarray*}
J(f,g)=\int_{X\setminus d\times X\setminus d}(f(x)-f(y))(g(x)-g(y))J(dx,dy),\ \forall\,f,g\in \calF.
\end{eqnarray*}

Given $f,g\in\calF$ set $\mu^c_{<f>}$ the {\em energy measure} of $f$ and  $\mu^c_{<f,g>}$ the {\em  mutual energy measure} of $f,g$ (see \cite[pp.110-114]{fuku-oshima}). Furthermore the strong local part of $\calE$ possesses the representation
\begin{eqnarray}
\calE^c[f]=\frac{1}{2}\mu^c_{<f>}(X),\ \forall\,f\in\calF.
\label{energy}
\end{eqnarray}
The representation goes as follows: for $f\in\calF_b$ its energy measure is defined by
\begin{eqnarray}
\int\phi\,d\mu^c_{<f>}=2\calE(f,\phi f)-\calE(f^2,\phi),\ \forall 0\leq\phi\in\calF\cap C_c(X).
\end{eqnarray}
Truncation and monotone convergence allow then to define $\mu^c_{<f>}$ for every $f\in\calF$.\\
Furthermore with the help of strong locality
\begin{eqnarray}
\int_{\{f=c\}}d\mu^c_{<f>}=0,\ \forall\,f\in\calF,
\end{eqnarray}
it is possible to define $\mu^c_{<f>}$ for every $f\in\calF_{\rm loc}$ as follows: for every relatively compact subset $\Om\subset X$
\begin{eqnarray}
1_{\Om}\mu^c_{<f>}=1_{\Om}\mu^c_{<\tilde f>},
\end{eqnarray}
where $\tilde f\in\calF$ and $f=\tilde f$-q.e. on $\Om$.\\
By polarization and regularity we can thereby define a Radon-measures-valued bilinear form on $\calF_{\rm loc}$ so that
\begin{eqnarray}
\calE^c(f,g)=\frac{1}{2}\mu^c_{<f,g>}(X),\ \forall\,f\in\calF_{\rm loc}.
\label{energy'}
\end{eqnarray}
The truncation property for $\calE^c$ reads as follows: For every $a\in\R$, every $f\in\calF_{\rm loc}$ and every $g\in\calF_{\rm b,loc}$ we have
\begin{eqnarray}
\calE^c((f-a)_+,g)=1_{\{f>a\}}\calE^c(f,g)\ {\rm and}\  \calE^c[(f-a)_+]=1_{\{f>a\}}\calE^c[f].
\label{truncation}
\end{eqnarray}
Furthermore the following product formula holds true
\begin{eqnarray}
d\mu^c_{<fh,g>}=fd\mu^c_{<h,g>}+hd\mu^c_{<f,g>},\ \forall\,f,g,h\in \calF_{\rm b,loc}.
\label{product}
\end{eqnarray}
By the regularity assumption the latter formula extends to every $f,g,h\in\calF_{\rm loc}$.\\ Another rule that we shall occasionally use is the {\em chain rule} (See \cite[pp.11-117]{fuku-oshima}): For every function $\phi:\R\to\R$ of class $C^1$ with bounded derivative ($\phi\in C_b^1(\R)$), every $f\in \calF_{\rm loc}$ and every $g\in\calF_{\rm b,loc}$, $\phi(f)\in\calF_{\rm loc}$ and
\begin{eqnarray}
d\mu^c_{<\phi(f),g>}=\phi'(f)d\mu^c_{<f,g>}.
\label{chain rule}
\end{eqnarray}
We improve a bit the chain rule.
\begin{lem} Let $\phi:(0,\infty)\to\R$ of class $C^1$ be such that for every $a>0$, $f\in C_b^1([a,\infty))$. Let $f\in\calF_{\rm loc}$ such that for every kompact subset $K\subset X$, there is $C_K>0$ such that $f\geq C_K$-q.e. on $K$. Then $\phi(f)\in\calF_{\rm loc}$ and
\begin{eqnarray}
d\mu^c_{<\phi(f),g>}=\phi'(f)d\mu^c_{<f,g>},\ \forall\,g\in\calF_{\rm b,loc}
\label{chainrule'}
\end{eqnarray}
\label{improve}
\end{lem}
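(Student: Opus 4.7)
The plan is to reduce everything to the standard chain rule (\ref{chain rule}) by a localization-cum-extension argument. On each relatively compact piece of $X$, the q.e.\ positive lower bound on $f$ makes the values of $\phi$ below that bound irrelevant, so I should be able to replace $\phi$ by a genuine $C_b^1(\R)$ extension and invoke the known chain rule directly.

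Concretely, I would fix a relatively compact open $\Om\subset X$ and set $a:=C_{\overline\Om}/2>0$, so that $f\geq 2a$ q.e.\ on $\Om$ by hypothesis. Next I would construct $\tilde\phi\in C_b^1(\R)$ with $\tilde\phi=\phi$ on $[a,\infty)$; the natural recipe is to extend $\phi$ to $(-\infty,a)$ by the $C^1$-matched affine prolongation with slope $\phi'(a)$, which is of class $C^1(\R)$ and whose derivative is bounded by $\sup_{[a,\infty)}|\phi'|<\infty$. Choosing $\tilde f\in\calF$ with $\tilde f=f$ q.e.\ on $\Om$, the known chain rule (\ref{chain rule}) applied to $\tilde\phi$ and $\tilde f$ then delivers $\tilde\phi(\tilde f)\in\calF_{\rm loc}$ (in fact in $\calF$, since $\tilde\phi$ is Lipschitz of at most linear growth and $\tilde f\in L^2$) together with the identity
\[
d\mu^c_{<\tilde\phi(\tilde f),g>}=\tilde\phi'(\tilde f)\,d\mu^c_{<\tilde f,g>},\quad\forall\,g\in\calF_{\rm b,loc}.
\]

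To close the argument, I would observe that q.e.\ on $\Om$ one has $f=\tilde f\geq 2a$, whence $\tilde\phi(\tilde f)=\phi(\tilde f)=\phi(f)$ and $\tilde\phi'(\tilde f)=\phi'(f)$ q.e.\ on $\Om$. Strong locality, in the very form that was used to define $\mu^c_{<\cdot>}$ on $\calF_{\rm loc}$ via localized representatives, then yields
\[
1_\Om\,d\mu^c_{<\phi(f),g>}=1_\Om\,d\mu^c_{<\tilde\phi(\tilde f),g>}=1_\Om\,\tilde\phi'(\tilde f)\,d\mu^c_{<\tilde f,g>}=1_\Om\,\phi'(f)\,d\mu^c_{<f,g>}.
\]
Letting $\Om$ range over a countable exhaustion of $X$ by relatively compact open sets both exhibits $\phi(f)$ as an element of $\calF_{\rm loc}$ (locally represented by $\tilde\phi(\tilde f)\in\calF$) and produces the claimed identity (\ref{chainrule'}).

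The only delicate point I anticipate is the construction of a $C^1$ extension of $\phi|_{[a,\infty)}$ whose derivative remains globally bounded on $\R$; the affine-prolongation device above handles this cleanly precisely because of the hypothesis $\phi\in C_b^1([a,\infty))$ for every $a>0$. Beyond that, the remaining work is routine bookkeeping: checking that the local construction is independent of the choices of $\Om$, $\tilde f$, and $\tilde\phi$, which follows at once from strong locality combined with the quasi-continuous convention on $\calF_{\rm loc}$ recalled in Section~2.
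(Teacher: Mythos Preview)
Your proposal is correct and follows essentially the same approach as the paper's own proof: localize to a compact piece, extend $\phi$ to a function in $C_b^1(\R)$, apply the standard chain rule (\ref{chain rule}) to a representative $\tilde f\in\calF$, and then invoke strong locality. The paper is terser (it works directly with a compact $K$ and does not spell out the affine-prolongation device or the exhaustion), but the underlying argument is the same; your only inessential slip is the parenthetical ``in fact in $\calF$'', which would require $\tilde\phi(0)=0$, but since $\tilde\phi(\tilde f)\in\calF_{\rm loc}$ is all that is needed this does not affect the proof.
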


\begin{proof} Let $K\subset X$, kompact and $C_K>0$ as in the lemma. Let $\tilde f\in\calF$ s.t. $\tilde f=f$-q.e.on $K$. We extend the restriction of $\phi$ to $[C_K,\infty)$ by a function $\tilde\phi\in C_b^1(\R)$. Then $\tilde\phi(\tilde f)\in\calF$ and $\tilde\phi(\tilde f)=\phi(f)$-q.e.on $K$ and by formula(\ref{chain rule})
\begin{eqnarray}
1_Kd\mu^c_{<\tilde\phi(\tilde f),g>}&=&1_K{\tilde\phi}'(f)d\mu^c_{<f,g>}\nonumber\\
&&= 1_Kd\mu^c_{<\phi(f),g>}=1_K\phi'(f)d\mu^c_{<f,g>}\forall\,g\in\calF_{\rm b,loc}
\end{eqnarray}
which was to be proved.
\end{proof}

We shall also make use of the following fact.

\begin{lem} Let $w$ be a q.c. function such that $w>0$-q.e. Then $w^{-1}$ is locally quasi-bounded.
\label{quasi-bounded}
\end{lem}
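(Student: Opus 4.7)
The plan is to exploit the nest characterisation of quasi-continuity. Since $w$ is quasi-continuous, there exists an increasing sequence of closed sets $(F_n)_{n\ge 1}$ with $\capa(X\setminus F_n)\to 0$ such that $w|_{F_n}$ is continuous for every $n$. The idea is to shrink this nest so that $w$ becomes strictly positive on each set of the refined nest; then continuity plus compactness will give a positive infimum, and hence a bound for $w^{-1}$.

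Since $w>0$ q.e., the polar set $N:=\{w\le 0\}$ can, by a standard covering argument available for regular Dirichlet forms, be enclosed in a decreasing sequence of quasi-open sets $O_n\supset N$ with $\capa(O_n)\to 0$. Replacing $F_n$ by a closed subset $\tilde F_n\subset F_n\setminus O_n$ still exhausting $X$ up to capacity (this is possible since $X\setminus O_n$ itself contains an exhausting closed nest), I obtain a refined nest on which $w$ is both continuous and strictly positive. For any compact $K\subset X$ the set $\tilde F_n\cap K$ is then compact, and $w|_{\tilde F_n\cap K}$ attains a positive minimum $\delta_n>0$; consequently $w^{-1}\le 1/\delta_n$ on $K\cap\tilde F_n$. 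Since $\capa(K\setminus\tilde F_n)\to 0$, this says that outside sets of arbitrarily small capacity $w^{-1}$ is bounded on $K$, which is exactly the local quasi-boundedness of $w^{-1}$.

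The main obstacle will be the bookkeeping in the construction of the refined nest $(\tilde F_n)$: one must simultaneously preserve closedness, the capacity-exhaustion property with respect to compact subsets, and the continuity of $w$ restricted to the nest. This is standard but slightly delicate, and it rests on the fact that every polar set is contained in quasi-open sets of arbitrarily small capacity. Once this set-theoretic step is in place, the remaining argument, namely extracting a positive infimum of a continuous strictly positive function on a compact set, is immediate.
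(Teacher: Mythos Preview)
Your argument is sound and shares the paper's overall strategy: refine the quasi-continuity nest $(F_k)$ so that $w$ is bounded away from zero on each piece, then read off the bound on compact intersections. The executions differ. You excise open neighbourhoods $O_n\supset\{w\le0\}$ of small capacity to form $\tilde F_n\subset F_n\setminus O_n$, and then use continuity of $w|_{\tilde F_n}$ together with compactness of $K\cap\tilde F_n$ to extract a positive minimum $\delta_n$. The paper refines more directly via level sets, taking $G_k:=F_k\cap\{w\ge 1/k\}$; since $w|_{F_k}$ is continuous this set is closed, and the lower bound $w\ge 1/k$ comes for free without any separate minimisation step. The paper then goes further, arguing by a covering/compactness step on $K':=K\cap\bigcup_k F_k$ that a single $G_{k_0}$ already contains $K'$, which yields a \emph{uniform} lower bound $w\ge\inf_{K'}w>0$ q.e.\ on $K$ --- and this uniform version is precisely what is invoked later in the proof of Theorem~\ref{Hardy}. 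Your conclusion is the weaker ``nest'' form (the bound $1/\delta_n$ depends on $n$); the level-set refinement is both slicker and, combined with the covering step, aims at the sharper statement that the applications actually require.
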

\begin{proof} By \cite[Theorem2.1.2]{fuku-oshima}, there is a nest $(F_k)$ s.t. $w\in C(F_k)$, for every $k$. Set $Y:=\cup_{k}F_k$, then $X\setminus Y$ has zero capacity.\\
For every integer $k$, we set
\begin{eqnarray}
G_k:=F_k\cap\{w\geq\frac{1}{k}\}.
\end{eqnarray}
Then $G_k$ is closed as well as for the topology of $X$ and that of $Y$ inherited from $X$. Also $K':=K\cap Y$ is compact w.r.t. to the trace topology of $X$ on $Y$. Since $(G_k)$ is a covering for $Y$ of closed sets, there is a finite number of $G_k$'s s.t. $K=\cup_{\rm finite}G_k$. Thus $\inf_{K'}w>0$.\\
On the other hand ${\rm Cap}(K\cap Y^c)\leq{\rm Cap}(X\setminus Y)=0$, yielding
\begin{eqnarray}
w(x)\geq\inf_{K'}w>0-{\rm q.e.\ on}\ K,
\end{eqnarray}
which was to be proved.
\end{proof}

\section{Hardy's inequality}

We are in position now to assert the first part of the main result.
\begin{theo} Let $\calE$ be a transient Dirichlet form and  $\mu$ be a positive Radon measure on Borel subsets of $X$,charging no sets having zero capacity. Assume that there is $C>0$ and a function $w\in\calF_{\rm loc}$, $w>0$-q.e., such that
\begin{eqnarray}
\frac{1}{2}\mu^c_{<w,f>}(X)+J(w,f)+\int fw\,d\kappa-C^{-1}\int wf\,d\mu\geq 0,\ \forall\,0\leq f\in\calF_{\rm loc}.
\label{MI}
\end{eqnarray}
Then the following Hardy's inequality holds true
\begin{eqnarray}
\int f^2\,d\mu\leq C\calE[f],\ \forall\,f\in\calF.
\label{HI}
\end{eqnarray}
\label{Hardy}
\end{theo}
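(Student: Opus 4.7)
The plan is to test hypothesis (\ref{MI}) against $f=g^2/w$ and to recognize, piece by piece, that the result is dominated by the three Beurling--Deny summands of $\calE[g]$. For admissibility, take $g\in\calF_b\cap C_c(X)$: Lemma \ref{quasi-bounded} gives local quasi-boundedness of $1/w$, while Lemma \ref{improve} with $\phi(t)=1/t$ on $(0,\infty)$ yields $1/w\in\calF_{\rm loc}$ with $d\mu^c_{<1/w,\cdot>}=-w^{-2}\,d\mu^c_{<w,\cdot>}$; hence $g^2/w\in\calF_{b,\rm loc}\subset\calF_{\rm loc}$. The product rule (\ref{product}) together with the chain rule (\ref{chain rule}) then give
\begin{eqnarray*}
d\mu^c_{<w,\,g^2/w>}\;=\;\frac{2g}{w}\,d\mu^c_{<w,g>}\;-\;\frac{g^2}{w^2}\,d\mu^c_{<w>}.
\end{eqnarray*}

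For the strong local piece of (\ref{MI}), the Kato--Schwarz inequality for energy measures, $|d\mu^c_{<w,g>}|\leq\sqrt{d\mu^c_{<w>}\,d\mu^c_{<g>}}$, followed by Cauchy--Schwarz and $2ab\leq a^2+b^2$, gives
\begin{eqnarray*}
\int \frac{g}{w}\,d\mu^c_{<w,g>}\;\leq\;\frac12\int\frac{g^2}{w^2}\,d\mu^c_{<w>}+\frac12\int d\mu^c_{<g>},
\end{eqnarray*}
whence $\tfrac12\mu^c_{<w,\,g^2/w>}(X)\leq\calE^c[g]$. For the jumping piece I rely on the pointwise algebraic identity
\begin{eqnarray*}
(g(x)-g(y))^2 - (w(x)-w(y))\!\left(\frac{g^2(x)}{w(x)}-\frac{g^2(y)}{w(y)}\right)=\frac{(g(x)w(y)-g(y)w(x))^2}{w(x)w(y)}\geq 0,
\end{eqnarray*}
which yields $J(w,g^2/w)\leq\int(g(x)-g(y))^2\,J(dx,dy)$. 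The killing piece reduces to the trivial equality $\int (g^2/w)\,w\,d\kappa=\int g^2\,d\kappa$.

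Summing the three estimates after inserting $f=g^2/w$ into (\ref{MI}), the Beurling--Deny decomposition produces
\begin{eqnarray*}
C^{-1}\int g^2\,d\mu\;\leq\;\calE^c[g]+\int(g(x)-g(y))^2\,J(dx,dy)+\int g^2\,d\kappa\;=\;\calE[g]
\end{eqnarray*}
for every $g\in\calF_b\cap C_c(X)$. Such $g$ are dense in $\calF$ in the form norm by regularity of $\calE$; passing to a quasi-continuous, $\calE$-convergent subsequence---which converges $\mu$-a.e., since $\mu$ charges no polar set---and applying Fatou's lemma on the left then extends (\ref{HI}) to all of $\calF$. The principal technical difficulty will be the careful pointwise manipulation of the energy measures (in particular the Kato--Schwarz estimate) and the verification that $g^2/w$ is simultaneously admissible in all three pieces of (\ref{MI}); should $w$ be poorly behaved near its zero set, a safe route is to replace $w$ by $w_n:=w\vee(1/n)$, run the argument for $w_n$, and pass to the limit by monotone/dominated convergence in each of the three components.
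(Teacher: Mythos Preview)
Your proof is correct and follows essentially the same ground-state substitution strategy as the paper: both plug $g^2/w$ (equivalently, the paper's $wf^2$ with $g=wf$) into hypothesis (\ref{MI}) and compare term by term with the Beurling--Deny decomposition of $\calE[g]$, invoking Lemmas \ref{improve} and \ref{quasi-bounded} for admissibility. The only cosmetic difference is that for the strongly local piece you appeal to the Cauchy--Schwarz inequality for energy measures, whereas the paper obtains the same bound via the exact identity $\calE^c[wf]-\tfrac12\mu^c_{<w,wf^2>}(X)=\tfrac12\int w^2\,d\mu^c_{<f>}\geq 0$; your explicit algebraic identity for the jump part and the equality for the killing term coincide with the paper's estimates, and your closing density/Fatou argument spells out what the paper abbreviates as ``using the regularity assumption.''
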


\begin{rk}{\rm
Condition(\ref{MI}) is fulfilled if there is a function $0<w$-q.e. $w\in\calF$ such that
\begin{eqnarray}
\calE(w,f)-C^{-1}\int wf\,d\mu\geq 0,\ \forall\,0\leq f\in\calF\cap C_0(X).
\end{eqnarray}
In particular, if $w$ is the potential of a positive measure $\mu$
charging no sets having zero capacity  and such that
$\|w\|_{\infty}<\infty$, we get
\begin{eqnarray}
\calE(w,f)=\int f\,d\mu\geq\|w\|_{\infty}^{-1}\int wf\,d\mu\
,\ \forall\,0\leq f\in\calF\cap C_0(X).
\end{eqnarray}
Obtaining therefore, the known inequality \cite{vondracek,stoll-voigt,fitz,benamor04}
\begin{eqnarray}
\int f^2\,d\mu\leq \|w\|_{\infty}\calE[f],\ \forall\,f\in\calF.
\end{eqnarray}


}
\label{discussion}
\end{rk}

We shall say that a measure $\mu$ satisfies Hardy's inequality if inequality(\ref{HI}) holds true.

\begin{proof} Without loss of generality we may and shall neglect the killing term in $\calE$.\\
Let $f$ be s.t. $wf\in\calF$. Since, by Lemma\ref{quasi-bounded}, for every compact subset $K\subset X$ there is $C_K$ s.t. $w^{-1}\leq C_K$-q.e. on $K$, we obtain by Lemma\ref{improve}, that $w^{-1}\in\calF_{b,loc}$ and $f=w^{-1}wf\in\calF_{loc}$.\\
By formula(\ref{energy}) together with the product formula(\ref{product}), we obtain
\begin{eqnarray}
\calE[wf]&=&\frac{1}{2}\mu^c_{<wf,wf>}+J[wf]\nonumber\\
&&=\frac{1}{2}\int w^2\,d\mu^c_{<f>}+\int wf\,d\mu^c_{<w,f>}+\frac{1}{2}\int f^2\,d\mu^c_{<w>}+J[wf]
\end{eqnarray}
Yielding
\begin{eqnarray*}
\calE[wf]-C^{-1}\int(wf)^2\,d\mu&=&\frac{1}{2}\int w^2\,d\mu^c_{<f>}+\int wf\,d\mu^c_{<w,f>}\nonumber\\
&&+\frac{1}{2}\int f^2\,d\mu^c_{<w>}-C^{-1}\int(wf)^2\,d\mu+J[wf]
\end{eqnarray*}
Replacing $f$ by $wf^2\in\calF_{\rm loc}$ in Eq.(\ref{MI}), we get
\begin{eqnarray}
0&&\leq\frac{1}{2}\int \,d\mu^c_{<wf^2,w>}-C^{-1}\int(wf)^2\,d\mu+J(w,wf^2)=\frac{1}{2}\int f^2\,d\mu^c_{<w>}\nonumber\\
&&+\frac{1}{2}\int w\,d\mu^c_{<f^2,w>}-C^{-1}\int(wf)^2\,d\mu+J(w,wf^2).
\end{eqnarray}
Observing that
\begin{eqnarray}
\int w\,d\mu^c_{<f^2,w>}=2\int wf\,d\mu^c_{<f,w>},
\end{eqnarray}
and that
\begin{eqnarray}
J(w,wf^2)\leq J[wf]
\end{eqnarray}
we achieve
\begin{eqnarray}
\int w^2f^2\,d\mu\leq C\calE[wf],
\end{eqnarray}
for every $f$ as given in the beginning of the proof.\\
Now let $f\in\calF\cap C_0(X)$. Then $f=ww^{-1}f$. We set $g:=w^{-1}f$. Then  $wg\in\calF$. Applying the first part of the proof and using the regularity assumption, we get the result.

\end{proof}

As an example of measures for which Hardy's inequality holds true we give
\begin{coro} Let $0<w$ be a superharmonic function and $\mu$ its Riesz charge. Then
\begin{eqnarray}
\int w^{-1}f^2\,d\mu\leq\calE[f],\ \forall\,f\in\calF.
\label{riesz charge}
\end{eqnarray}
\label{example}
\end{coro}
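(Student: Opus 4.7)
The plan is to apply Theorem \ref{Hardy} to the Hardy measure $\nu := w^{-1}\mu$ with constant $C=1$, taking $w$ itself as the positive weight in the variational condition (\ref{MI}); the superharmonicity of $w$ (in the sense that $\mu$ is its Riesz charge) is exactly what will make (\ref{MI}) automatic. Admissibility of $\nu$ is straightforward to check: since $w$ is quasi-continuous with $w>0$ q.e., Lemma \ref{quasi-bounded} gives that $w^{-1}$ is locally quasi-bounded, while the Riesz charge $\mu$ of a superharmonic function is a smooth measure and hence charges no polar set; the product $\nu = w^{-1}\mu$ is therefore a positive Radon measure charging no set of zero capacity.

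Next I would verify condition (\ref{MI}) for the pair $(w,\nu)$ with $C=1$. Observing that $\int wf\, d\nu = \int f\, d\mu$, the inequality reduces to
\begin{equation*}
\tfrac{1}{2}\mu^c_{<w,f>}(X) + J(w,f) + \int wf\, d\kappa \;\geq\; \int f\, d\mu, \qquad 0 \leq f \in \calF_{\rm loc}.
\end{equation*}
By the Beurling-Deny formula the left-hand side is exactly $\calE(w,f)$, which by the very definition of $\mu$ as the Riesz charge of $w$ equals $\int f\, d\mu$. Hence (\ref{MI}) holds with equality, and Theorem \ref{Hardy} delivers $\int f^2\, d\nu \leq \calE[f]$, i.e. the desired inequality (\ref{riesz charge}).

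The only delicate point is to justify the Riesz identity $\calE(w,f)=\int f\, d\mu$ on the class of test functions needed inside the proof of Theorem \ref{Hardy}. This holds a priori for $f\in\calF\cap C_c(X)$, which is in fact the class ultimately invoked in that proof; extending it if needed to nonnegative $f\in\calF_{\rm loc}$ of compact support is routine via truncation together with the product and chain rules recalled in Section 2. I do not expect any substantial obstruction, the corollary being essentially a direct application of the main theorem.
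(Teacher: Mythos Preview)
Your approach matches the paper's first step: both verify condition~(\ref{MI}) for the measure $\nu=w^{-1}\mu$ with $C=1$ via the Riesz identity $\calE(w,f)=\int f\,d\mu$, and then invoke Theorem~\ref{Hardy}. For $w\in\calF$ this is exactly what the paper does.

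There is, however, one point you have not addressed and which the paper treats separately. Theorem~\ref{Hardy} requires $w\in\calF_{\rm loc}$, and a general superharmonic function need not lie in $\calF_{\rm loc}$ (already in the classical case on $\R^d$, $d\geq 3$, the Newtonian kernel $|x|^{2-d}$ is superharmonic but $|\nabla|x|^{2-d}|^2$ is not locally integrable). Your direct application of Theorem~\ref{Hardy} therefore covers only the case $w\in\calF_{\rm loc}$. The paper handles the general case by an approximation argument: choose $\mu_k\uparrow\mu$ with $w_k:=U\mu_k\in\calF$, apply the first step to each $w_k$, and pass to the limit using $w_k\leq w$ and monotone convergence:
\[
\int w^{-1}f^2\,d\mu=\lim_k\int w^{-1}f^2\,d\mu_k\leq\lim_k\int w_k^{-1}f^2\,d\mu_k\leq\calE[f].
\]
This second step is what is missing from your proposal; the ``delicate point'' you flag about test functions is in fact not the real issue.
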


\begin{proof} Assume first that $w\in\calF$. Then for all $f\in\calF\cap C_c(X)$ we have
\begin{eqnarray}
\calE(w,w^{-1}f)=\int w^{-1}f\,d\mu,
\end{eqnarray}
which yields inequality(\ref{riesz charge}) by Theorem\ref{Hardy}.\\
For general $w$, let $\mu_k\uparrow\mu$, be such that $w_k:=U\mu_k\in\calF$. Then by the first step
\begin{eqnarray}
\int w^{-1}f^2\,d\mu&=&\lim_{k\to\infty}\int w^{-1}f^2\,d\mu_k\leq\lim_{k\to\infty}\int w_k^{-1}f^2\,d\mu_k\nonumber\\
&&\leq \calE[f],\ \forall\,f\in\calF,
\end{eqnarray}
which finishes the proof
\end{proof}

\begin{rk}{\rm
On the light of Corollary\ref{example}, Thereom\ref{Hardy} has the following consequence: every measure which is dominated by a constant times  the inverse of a nonnegative superharmonic  function times its Riesz charge  satisfies Hardy's inequality.\\ This result is
 exactly Fitzsimmons's result \cite{fitz}.
}
\end{rk}

\begin{exa}Improved Hardy inequality in the half-space: {\rm In this example we shall rediscover an improved Hardy inequality proved in \cite[Corollary3.1]{tidblom}. Let $d\geq 3$. Set $\R_+^d$ the upper half-space. Set
$$
\psi(x):=x_d^{\frac{1}{2}}(x_{d-1}^2+x_d^2)^{\frac{1}{4}},\ x\in \R_+^d.
$$
Let $0<\epsilon<1/4$. Then with $w:=\psi$, we get
\begin{eqnarray}
-\Delta w-\frac{1}{8}w\psi^{-2}-(\frac{1}{4}-\epsilon)wx_d^{-2}&=&
\frac{1}{4}w\big(\frac{1}{(x_{d-1}^2+x_d^2)^{2}}\nonumber\\
&&+\frac{4\epsilon}{x_d^2}
-\frac{1}{2x_d(x_{d-1}^2+x_d^2)^{\frac{1}{2}}}\big)
\geq 0
\end{eqnarray}
Thus by Theorem\ref{Hardy}, we obtain
\begin{eqnarray*}
(\frac{1}{4}-\epsilon)\int_{\R_+^d}x_d^{-2}f^2\,dx+
\frac{1}{8}\int_{\R_+^d}\frac{f^2}{x_d(x_{d-1}^2+x_d^2)^{\frac{1}{2}}}\,dx
\leq\int_{\R_+^d}|\nabla f|^2\,dx,\ \forall f\in C_0^{\infty}({\R_+^d}).
\end{eqnarray*}
Letting $\epsilon\to 0$, we derive
\begin{eqnarray}
\frac{1}{4}\int_{\R_+^d}x_d^{-2}f^2\,dx+
\frac{1}{8}\int_{\R_+^d}\frac{f^2}{x_d(x_{d-1}^2+x_d^2)^{\frac{1}{2}}}\,dx
\leq\int_{\R_+^d}|\nabla f|^2\,dx,\ \forall f\in C_0^{\infty}({\R_+^d}).
\end{eqnarray}

}
\end{exa}

As in the context of Ancona and Fitzsimmons (See \cite[Proposition 1]{ancona},\cite{fitz}) we proceed to show that a sort of converse to Theorem(\ref{Hardy}) holds true.

\begin{theo} Assume that inequality(\ref{HI}) holds true. Then for every $0<\Lam<C^{-1}$ there is $w\in\calF$, $w>0$-q.e., and fulfills condition(\ref{MI}).
\label{converse-hardy}
\end{theo}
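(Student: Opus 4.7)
The plan is to realize $w$ as the $\calE_\Lambda$-potential of a strictly positive reference, where
\begin{eqnarray*}
\calE_\Lambda(f,g) := \calE(f,g) - \Lambda \int fg\, d\mu.
\end{eqnarray*}
From (HI) and $\Lambda < C^{-1}$ one has
\begin{eqnarray*}
(1 - \Lambda C)\calE[f] \leq \calE_\Lambda[f] \leq \calE[f], \quad f \in \calF,
\end{eqnarray*}
so $\calE_\Lambda$ is a closed coercive symmetric form with norm equivalent to $\calE$ on the extended Dirichlet space $\calF_e$; in particular $(\calF_e,\calE_\Lambda)$ is a Hilbert space.

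By transience, fix a strictly positive $g$ such that $g\cdot m$ has finite $0$-order energy, i.e., such that $L(\phi) := \int \phi g\, dm$ is $\sqrt{\calE}$-continuous on $\calF_e$. Riesz representation on $(\calF_e, \calE_\Lambda)$ then furnishes a unique $w \in \calF_e$ with
\begin{eqnarray*}
\calE_\Lambda(w, \phi) = \int \phi g\, dm, \quad \forall\, \phi \in \calF_e.
\end{eqnarray*}
A mild refinement of $g$ (e.g.\ taking $g$ also in $L^2(m)$) places $w$ in $\calF$ itself.

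The sign $w \geq 0$ q.e.\ follows by the standard Dirichlet-form trick: decompose $w = w_+ - w_-$ in $\calF_e$ (allowed since $|\cdot|$ is a normal contraction), test the identity above against $\phi = -w_-$, and combine $\calE(w_+, w_-) \leq 0$ with $\int w(-w_-)\, d\mu = \int w_-^2\, d\mu$ to deduce $\calE_\Lambda[w_-] \leq 0$, which by coercivity forces $w_- = 0$. Strict positivity is what I expect to be the main obstacle; the route I would take is to expand $w$ as the Neumann series
\begin{eqnarray*}
w = \sum_{k \geq 0} \Lambda^k (G M_\mu)^k Gg,
\end{eqnarray*}
where $G$ is the $0$-resolvent of $\calE$ and $M_\mu$ denotes multiplication by $d\mu$. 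Hardy's inequality yields $\Lambda \|G M_\mu\|_{\calF_e \to \calF_e} \leq \Lambda C < 1$, so the series converges in $\calF_e$; every summand is nonnegative, and the leading term $Gg$ is strictly positive q.e.\ because $g > 0$ q.e.\ and $\calE$ is transient.

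Finally, the defining identity of $w$ combined with the Beurling-Deny decomposition
\begin{eqnarray*}
\calE(w, f) = \frac{1}{2}\mu^c_{<w,f>}(X) + J(w, f) + \int wf\, d\kappa
\end{eqnarray*}
gives (MI), with $C^{-1}$ replaced by $\Lambda$, for every $0 \leq f \in \calF$. To reach arbitrary $0 \leq f \in \calF_{\rm loc}$, approximate $f$ by truncations $f_n := (f \wedge n)\chi_n \in \calF_b$ with $\chi_n \in \calF \cap C_c(X)$ a cutoff exhausting $X$, and pass to the limit in each of the three terms of the decomposition via the truncation rule (\ref{truncation}), the product formula (\ref{product}) and Lemma \ref{improve}.
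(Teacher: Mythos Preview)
Your argument is correct and runs on the same rail as the paper's: exploit the gap $\Lambda C<1$ to invert the perturbed form $\calE-\Lambda\mu$, realize $w$ as the resulting potential of a strictly positive datum, and read off strict positivity of $w$ from the Neumann expansion. The packaging differs. The paper works on $L^2(\mu)$ through the operator $K^\mu\colon f\mapsto U(f\mu)$: it quotes that (\ref{HI}) gives $\|K^\mu\|_{L^2(\mu)}\le C$, picks $0<\varphi\le 1$ in $\calF$, solves $(1-\Lambda K^\mu)\psi=\varphi$ by Neumann series in $L^2(\mu)$, sets $w:=K^\mu\psi=U(\psi\mu)$, and checks directly that $\calE(w,f)-\Lambda\int wf\,d\mu=\int f\varphi\,d\mu\ge 0$. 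You instead stay on the extended Dirichlet space, apply Riesz representation to $(\calF_e,\calE_\Lambda)$ against the reference $g\,dm$, and expand $w=\sum_{k\ge 0}\Lambda^k(GM_\mu)^kGg$. Your route is slightly more self-contained (no external boundedness result for $K^\mu$ is invoked), while the paper's keeps the measure $\mu$ front and center by working in $L^2(\mu)$; otherwise the two are reparametrizations of the same construction.

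One small caveat: the claim that choosing $g\in L^2(m)$ forces $w\in\calF$ is not justified as stated, since the $0$-potential of an $L^2$ function need not lie in $L^2$ for a merely transient form. The paper's proof is equally brief on why its $w=U(\psi\mu)$ lands in $\calF$ rather than only in $\calF_e$. Since Theorem~\ref{Hardy} only requires $w\in\calF_{\rm loc}$, this does not affect the substance of either argument.
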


\begin{proof} Suppose that (\ref{HI}) holds true. Then by \cite[Theorem3.1]{benamor04}, the operator
\begin{eqnarray}
K^{\mu}:=L^2(\mu)\to L^2(\mu),\ f\mapsto Uf\mu,
\end{eqnarray}
where $Uf\mu$ is the potential of $f\mu$ is bounded and $\|K^{\mu}\|\leq C$. Thus for every $0<\Lam<C^{-1}$ the operator $1-\Lam K^{\mu}$ is invertible on $L^2(\mu)$.\\
Let $\varphi\in\calF$, s.t.  $0<\varphi\leq 1$. Then there is $\psi\in L^2(\mu)$ with $\psi-\Lam K^{\mu}\psi=\varphi$-$\mu$ a.e. Thus
\begin{eqnarray}
K^{\mu}\psi-\Lam K^{\mu}(K^{\mu}\psi)=K^{\mu}\varphi-q.e.
\end{eqnarray}
Since $\varphi>0$, $K^{\mu}$ is positivity preserving and
\begin{eqnarray}
\psi=\sum_{k=0}^{\infty}\Lam^k(K^{\mu})^k\varphi,
\end{eqnarray}
we conclude that $\psi>0$-$\mu$-a.e. and $w:=K^{\mu}\psi>0$-q.e., which by Lemma\ref{quasi-bounded} yields that $w^{-1}$ is quasi-bounded.\\
For the rest of the proof, observe that for every $0\leq f\in\calF$
\begin{eqnarray}
\calE(w,f)-\Lam\int wf\,d\mu&&=\calE(K^{\mu}\psi,f)-\Lam\int wf\,d\mu\nonumber\\
&&=\calE(K^{\mu}\varphi,w)+\Lam\calE(K^{\mu}w,f)-\Lam\int wf\,d\mu
\nonumber\\&&=\int w\varphi\,d\mu\geq 0,
\end{eqnarray}
which finishes the proof.

\end{proof}

The proof of Theorem\ref{converse-hardy} shows that if the operator $1-K^{\mu}$ is invertible, then the conclusion holds true with $\Lam=C^{-1}$ as well.\\
We shall add an alternative a assumption (which is fulfilled in many cases) on the form
\begin{eqnarray}
\calE_{\mu}, D(\calE_{\mu})=\calF, \calE_{\mu}[f]=\calE[f]-\int f^2\,d\mu,
\end{eqnarray}
that ensures that the case $\Lam=C^{-1}$ is included as well.

\begin{prop} Let $\mu$ be a positive Radon measure on Borel subsets of $X$ that satisfies the Hardy's inequality with best constant $1$. Assume that there is $\Lam>0$ s.t.
\begin{eqnarray}
\int f^2\,dm\leq\Lam\calE_{\mu}[f],\ \forall\,f\in D(\calE).
\label{IHI}
\end{eqnarray}
Then for every $g\in\calF$ there is $f\in\calF$ s.t.
\begin{eqnarray}
\calE_{\mu}(\varphi,f)=\int\varphi g\,dm\, \forall\,\varphi\in\calF.
\end{eqnarray}
If in particular $g>0$-q.e. then there is $0<w$-q.e., $w\in\calF$ and satisfies condition (\ref{MI}) with $C=1$.
\end{prop}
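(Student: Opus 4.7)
The plan is to obtain $f$ from a Riesz-type representation applied to the form $\calE_\mu$. By the Hardy inequality with best constant $1$ one has $\calE_\mu[\varphi]\geq 0$ on $\calF$, and (\ref{IHI}) gives $\|\varphi\|_{L^2(m)}^{2}\leq\Lam\calE_\mu[\varphi]$; together these make $\calE_\mu$ an inner product on $\calF$ whose norm dominates the $L^2(m)$-norm. The functional $T\varphi:=\int\varphi g\,dm$ is continuous for this structure since $|T\varphi|\leq\|g\|_{L^2(m)}\|\varphi\|_{L^2(m)}\leq\Lam^{1/2}\|g\|_{L^2(m)}\calE_\mu[\varphi]^{1/2}$. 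A direct Lax--Milgram on $(\calF,\calE_\mu)$ is delicate because the relative bound equals one (the borderline of the KLMN theorem), so I plan to use a regularization and pass to the limit.

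For each $\lambda\in(0,1)$ set $\calE_{\lambda\mu}[\varphi]:=\calE[\varphi]-\lambda\int\varphi^{2}\,d\mu$. Hardy with constant $1$ gives $\calE_{\lambda\mu}[\varphi]\geq(1-\lambda)\calE[\varphi]$, so $\calE_{\lambda\mu}$ is closed on $L^2(m)$ with domain $\calF$ and strictly coercive relative to $\calE$. The classical Lax--Milgram theorem then produces a unique $f_\lambda\in\calF$ satisfying
\begin{eqnarray*}
\calE_{\lambda\mu}(\varphi,f_\lambda)=\int\varphi g\,dm,\qquad\forall\,\varphi\in\calF.
\end{eqnarray*}
Testing with $\varphi=f_\lambda$ and invoking (\ref{IHI}) via $\|f_\lambda\|_{L^2(m)}\leq\Lam^{1/2}\calE_\mu[f_\lambda]^{1/2}$, the identity $\calE_\mu[f_\lambda]+(1-\lambda)\int f_\lambda^{2}\,d\mu=\int f_\lambda g\,dm$ yields the $\lambda$-uniform bounds $\calE_\mu[f_\lambda]\leq\Lam\|g\|_{L^2(m)}^{2}$ and $(1-\lambda)\int f_\lambda^{2}\,d\mu\leq\Lam\|g\|_{L^2(m)}^{2}$.

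Next I would pass to the limit $\lambda\to 1$. The first bound gives a subsequence converging weakly to some $\tilde f$ in the Hilbert-space completion $H$ of $(\calF,\calE_\mu)$, while the second bound together with Cauchy--Schwarz forces $(1-\lambda)\int\varphi f_\lambda\,d\mu\to 0$ for each fixed $\varphi\in\calF$. Passing to the limit in the regularized equation then produces $\calE_\mu(\varphi,\tilde f)=\int\varphi g\,dm$ on $H$, a fortiori for $\varphi\in\calF$. The main obstacle is to upgrade $\tilde f\in H$ to $f\in\calF$: this is where the assumption $g\in\calF$ (not just $g\in L^2(m)$) is crucial, and I expect to use weak lower semicontinuity of $\calE$ combined with the equation tested against carefully chosen $\varphi$ to bound $\calE[\tilde f]$ and place the limit back in the original form domain.

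For the second assertion, suppose $g>0$ q.e. Each $f_\lambda$ is then $>0$ q.e., since the resolvent of $\calE_{\lambda\mu}$ inherits positivity from the Markovian property of $\calE$ (via the Neumann-series / Feynman--Kac argument already exploited in the proof of Theorem~\ref{converse-hardy}). Hence $w:=f\geq 0$ q.e.\ after passage to the limit, and the identity $\calE_\mu(\varphi,w)=\int\varphi g\,dm$ with $g>0$ upgrades this to $w>0$ q.e. Condition (\ref{MI}) with $C=1$ then follows directly: for $0\leq\varphi\in\calF\cap C_c(X)$ the Beurling--Deny decomposition rewrites
\begin{eqnarray*}
\calE_\mu(w,\varphi)=\frac{1}{2}\mu^{c}_{<w,\varphi>}(X)+J(w,\varphi)+\int w\varphi\,d\kappa-\int w\varphi\,d\mu,
\end{eqnarray*}
which equals $\int\varphi g\,dm\geq 0$; extension to $0\leq\varphi\in\calF_{\rm loc}$ is routine via truncation and the regularity of $\calE$.
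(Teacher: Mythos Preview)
The paper omits the proof entirely, remarking only that it is ``easy''. The intended argument is almost certainly the direct one you sketch in your first paragraph: by (\ref{IHI}) the closed form $\calE_\mu$ (cf.\ Step~1 of the proof of the next proposition) has associated operator $H_\mu\geq\Lam^{-1}>0$, hence $H_\mu$ is boundedly invertible on $L^2(m)$, and one sets $f:=H_\mu^{-1}g$; positivity of $w=f$ when $g>0$ q.e.\ follows from positivity preservation of the resolvent, and (\ref{MI}) with $C=1$ is then immediate from $\calE_\mu(w,\varphi)=\int\varphi g\,dm\geq 0$. Your regularization by $\lambda\in(0,1)$ is therefore a detour relative to what the authors have in mind.

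More importantly, your proof is incomplete at the point you yourself flag: upgrading the weak limit $\tilde f$ from the abstract completion $H$ of $(\calF,\calE_\mu)$ back into $\calF$. Your proposed remedy---weak lower semicontinuity of $\calE$---does not work as stated, because your own estimates give only
\[
\calE[f_\lambda]=\calE_\mu[f_\lambda]+\int f_\lambda^2\,d\mu\;\leq\;\Lam\|g\|_{L^2(m)}^2+\frac{\Lam\|g\|_{L^2(m)}^2}{1-\lambda},
\]
which may blow up as $\lambda\to 1$; so no $\calE$-bounded subsequence is available, and lower semicontinuity of $\calE$ cannot be invoked. The hypothesis $g\in\calF$ does not obviously rescue this: testing the regularized equation with $\varphi=g$ yields $\calE(f_\lambda,g)-\lambda\int f_\lambda g\,d\mu=\|g\|_{L^2(m)}^2$, which still does not control $\calE[f_\lambda]$. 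In short, the regularization buys you nothing over the direct Riesz argument, and the residual difficulty---whether the representing element lies in $\calF$ itself or only in the form domain $D(\overline{\calE_\mu})$ of the closure---is the same in both approaches. The paper appears to treat this identification tacitly; if you wish to be rigorous here you should either argue that (\ref{IHI}) forces $D(\overline{\calE_\mu})=\calF$, or weaken the conclusion to $f\in D(\overline{\calE_\mu})$, which is in any case sufficient for the application to (\ref{MI}).
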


The proof is easy, so we omit it.\\

Let $\mu$ be a positive Radon measure on Borel subsets of $X$ charging no sets having zero capacity. Assume that there is $w$ satisfying the assumptions of Theorem\ref{Hardy} with best constant $C=1$. Then Theorem\ref{Hardy} yields that  the quadratic form defined by
\begin{eqnarray}
D(\calE^{w})=\{f\in L^2(w^2dm):\,wf\in\calF\},\ \calE^{w}[f]=\calE_{\mu}[wf],
\end{eqnarray}
is a positive quadratic form. We shall prove that $\calE^w$ is,
in fact, a Dirichlet form. A proof of this result was shortly quoted
by Fitzsimmon \cite{fitz} using a probabilistic method. We shall, however prove
it using an analytical one.

\begin{prop} Under the above assumptions the form $\calE^w$ is  a Dirichlet form.
\end{prop}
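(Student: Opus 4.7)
The plan is to write $\calE^w$ in Beurling--Deny form on $L^2(w^2\,dm)$ and then read off both the Markov property and the closedness termwise.

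Starting from $\calE^w[f]=\calE[wf]-\int(wf)^2\,d\mu$, I would rerun the product-formula computation from the proof of Theorem~\ref{Hardy}, together with the algebraic identity $(au-bv)^2-ab(u-v)^2=(a-b)(au^2-bv^2)$ applied at $a=w(x),b=w(y),u=f(x),v=f(y)$, to derive
\[ J[wf]-J(w,wf^2)=\int\!\!\int w(x)w(y)(f(x)-f(y))^2\,J(dx,dy)=:J^w[f], \]
and hence the decomposition
\[ \calE^w[f] = \tfrac12\!\int w^2\,d\mu^c_{<f>} + J^w[f] + R[f], \]
with $R[f]:=\tfrac12\!\int d\mu^c_{<wf^2,w>}+J(w,wf^2)+\int w^2f^2\,d\kappa-\int w^2f^2\,d\mu$ nonnegative by~(\ref{MI}) at $\phi=wf^2$. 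Moreover, the linear functional $\Phi(\phi):=\tfrac12\mu^c_{<\phi,w>}(X)+J(w,\phi)+\int w\phi\,d\kappa-\int w\phi\,d\mu$ on $\calF_{\rm loc}$ is nonneg on the positive cone (this is exactly (\ref{MI})), hence by the Riesz representation theorem (after extending $\Phi$ to $C_c(X)$ via regularity) there is a positive Radon measure $\nu$ with $\Phi(\phi)=\int\phi\,d\nu$, so $R[f]=\int wf^2\,d\nu=\int f^2\,d\sigma$ for $d\sigma:=w\,d\nu$. This puts $\calE^w$ in genuine Beurling--Deny form on $L^2(w^2\,dm)$.

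From this decomposition the Markov property follows termwise. Given a normal contraction~$T$ (approximated by $T\in C_b^1(\R)$ with $T(0)=0$ and $0\le T'\le 1$), the chain rule~(\ref{chain rule}) gives $d\mu^c_{<T(f)>}=T'(f)^2\,d\mu^c_{<f>}\le d\mu^c_{<f>}$; the pointwise contraction $(T(f)(x)-T(f)(y))^2\le(f(x)-f(y))^2$ controls the jump term; and $T(f)^2\le f^2$ controls $\int T(f)^2\,d\sigma\le\int f^2\,d\sigma$. Domain stability $T(f)\in D(\calE^w)$ follows from $wT(f)=(wf\vee 0)\wedge w\in\calF$, using the lattice closure of $\calF$ localized via Lemma~\ref{quasi-bounded} (to handle $w\in\calF_{\rm loc}\setminus\calF$). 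Closedness is then immediate: each of the three summands in the Beurling--Deny form is Fatou-lower-semicontinuous along $L^2(w^2\,dm)$-convergent sequences (the jump and killing parts directly, the strong-local part via the closedness of $\calE$ through the isometry $U:L^2(w^2\,dm)\to L^2(m)$, $Uf=wf$), so $\calE^w$ is lower semicontinuous and a Cauchy sequence closes its gap to its $L^2$-limit.

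The main obstacle will be the Riesz-representation step: while~(\ref{MI}) supplies nonnegativity of $\Phi$ on $\calF_{\rm loc}$, producing a bona fide positive Radon measure requires extending $\Phi$ continuously to $C_c(X)$, which uses the regularity of $\calE$, Lemma~\ref{quasi-bounded}, and careful quasi-continuous approximation of positive test functions. Once this is in hand, the critical (Hardy saturated at $C=1$) nature of the problem is fully absorbed into the killing measure $\sigma$, and the usual Beurling--Deny calculus yields the claim without further subtlety.
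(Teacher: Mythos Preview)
Your route is correct in outline and genuinely different from the paper's. You exhibit an explicit Beurling--Deny decomposition of $\calE^w$ on $L^2(w^2\,dm)$ --- strong-local part $\hat\calE[f]=\tfrac12\int w^2\,d\mu^c_{<f>}$, jump kernel $w(x)w(y)\,J(dx,dy)$, and a killing measure $\sigma=w\,d\nu$ produced by Riesz from the nonnegativity~(\ref{MI}) --- and then read off Markovianity termwise. The paper instead takes an operator-theoretic shortcut: it first shows $\calE_\mu$ is closable on $\calF$ (by exhibiting a positive symmetric operator with dense domain and invoking a theorem of Davies), transfers closedness to $\calE^w$ via your unitary $U$, and then obtains the Markov property not termwise but by \emph{resolvent domination}: from $0\le\hat\calE\le\calE^w$ one gets $(H_\mu^w+\alpha)^{-1}\le(\hat H+\alpha)^{-1}$, and since the left-hand side is positivity preserving while the right-hand side is sub-Markovian, the left-hand side is sub-Markovian too. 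Your approach yields more --- the actual Beurling--Deny data of the ground-state-transformed form --- at the cost of the Riesz step you rightly flag; the paper's approach avoids Riesz entirely and never needs to identify the killing measure.

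One point to correct: the parenthetical justifying lower semicontinuity of the strong-local summand (``via the closedness of $\calE$ through the isometry $U$'') does not do what you want. The isometry $U$ identifies $\calE^w$ with $\calE_\mu$, not $\hat\calE$ with $\calE$; and at criticality $C=1$ the form $\calE_\mu$ is in general only \emph{closable} on $\calF$, not closed --- this is precisely the content of the paper's Step~1, and the domain may strictly enlarge under closure. So either argue closability of $\hat\calE$ directly (it is a densely defined nonnegative form with the truncation property, as the paper observes in its Step~3) and pass to closures throughout, or --- more simply --- use $U$ to reduce the question to closability of $\calE_\mu$ and supply that argument, bypassing the termwise l.s.c.\ claim for $\hat\calE$ altogether.
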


\begin{proof} We develop the proof by steps.\\
{\em Step 1}: $\calE_{\mu}$ is closable. Indeed,\\
We associate to $\calE_{\mu}$ a positive symmetric operator $H_{\mu}$ such that $D(H_{\mu})=D(H)$ and
$$
(H_{\mu}f,g)=\calE_{\mu}(f,g),\ \forall\,f\in D(H_{\mu}),g\in\calF.
$$
Since $\calF$ is dense in $L^2$ then so is $D(H_{\mu})$. Thus by \cite[Theorem1.2.8]{davies}, $\calE_{\mu}$ is closable. We still denote by $\calE_{\mu}$ its closure and $H_{\mu}$ the operator associated to it via Kato's representation theorem.\\
{\em Step 2}: $\calE_{\mu}^w$ is closed. The operator $H_{\mu}^w:=w^{-1}H_{\mu}w$ is closed and for every $f,g$ s.t. $wf,wg\in D(H_{\mu}^{1/2})=\calF$ we have
\begin{eqnarray}
\big((H_{\mu}^w)^{1/2}f,g\big)_{L^2(w^2dm)}=\calE_{\mu}^w(f,g).
\end{eqnarray}
Thus $\calE_{\mu}^w$ is closed.\\
{\em step 3}: $\calE_{\mu}^w$ is a Dirichlet form. Set
\begin{eqnarray}
 \hat\calE:\ D(\hat\calE)=D(\calE_{\mu}^w),\ \hat\calE[f]=\frac{1}{2}\int w^2\,d\mu^c_{<f>}.
\end{eqnarray}
Then $\hat\calE$ is a densely defined closable positive quadratic form satisfying the truncation
property (by property(\ref{truncation})). Hence its closure is a Dirichlet form, which we still
 denote by  $\hat\calE$. We denote by $\hat H$ its related operator.\\
On the other hand we have (by Theorem\ref{hardy inequality})
\begin{eqnarray}
0\leq\hat\calE\leq\calE_{\mu}^w,
\end{eqnarray}
yielding, for every $\alp>0$
\begin{eqnarray}
0\leq (H_{\mu}^w+\alp)^{-1}\leq (\hat H+\alp)^{-1}.
\end{eqnarray}
Now since $(H_{\mu}^w+\alp)^{-1}$ is positivity preserving (because $(H_{\mu}+\alp)^{-1}$ is) and $(\hat H+\alp)^{-1}$ is Markovian, we derive that $(H_{\mu}^w+\alp)^{-1}$ is Markovian as well and $\calE_{\mu}^w$ is a Dirichlet form, wich finishes the proof.


\end{proof}

\section{Examples for strongly local Dirichlet forms}

In this section we shall concentrate on giving general and concrete examples of measures satisfying the Hardy inequality provided the Dirichlet form is strongly local. Furthermore in some positions we shall even improve Hardy's inequality.\\
These examples are mainly inspired from classical Hardy's on Euclidean domains having strong barriers \cite{ancona}.
\begin{eqnarray}
\int_{\Om}\frac{f^2(x)}{{\rm dist}(x,\partial\Om)}\,dx\leq C_{\Om}\int_{\Om}|\nabla f(x)|^2\,dx,\ \forall\,f\in W_0^1(\Om).
\label{hardy domains}
\end{eqnarray}
and from an example given by Fitzsimmons  \cite[Example4.2]{fitz}.\\

 For the sake of completeness, we recall some basic concepts related to strongly local Dirichlet forms.\\ Every strongly local Dirichlet form, $\calE$ induces a {\em pseudo-metric} on $X$ known as the {\em intrinsic metric} and defined by
\begin{eqnarray}
\rho(x,y):=\sup\big\{f(x)-f(y),\:f\in\calF_{\rm loc},\ \frac{1}{2}\mu^c_{<f>}\leq m\ {\rm on}\ X\big\},
\label{metric}
\end{eqnarray}
 where the inequality  $1/2\mu^c_{<f>}\leq m$ in the above definition means that the energy measure $\mu^c_{<f>}$ is absolutely continuous w.r.t. the reference measure $m$ with Radon-Nikodym derivative smaller that $1$.\\
Throughout this section we shall assume that {\em $\rho$ is a true metric whose topology coincides with the original one and that $(X,\rho)$ is complete}.\\
For a given  closed subset $F\subset X$, we set
\begin{eqnarray}
\rho_F(x):=\rho(x,F),\ \forall\,x\in X.
\end{eqnarray}
Then under the above assumption (See \cite[Remark after Lemma1.9]{sturm2}),
$$\rho_F\in\calF_{\rm loc}\cap C(X)\ {\rm and}\ \frac{1}{2}d\mu^c_{<\rho_F>}\leq dm.$$

Now let $\Om\subset X$ be an open fixed subset, $\calE_{\Om}$ the form defined by
\begin{eqnarray*}
\calF_{\Om}:=D(\calE_{\Om})=\big\{f\in D(\calE)\,:f=0-{\rm q.e.\ on}\ X\setminus\Om\big\}, \calE_{\Om}[f]=\calE[f].
\end{eqnarray*}
Then $\calE_{\Om}$ is a regular strongly local Dirichlet form on $L^2(\overline\Om,m)$ (\cite[Theorem4.4.3]{fuku-oshima}).\\ Set $F=X\setminus\Om$ or any closed subset of $\Om$ having zero capacity and $\calF_{\Om,{\rm loc}}$ the local domain of $\calE_{\Om}$.\\
We are in position now to extend inequality(\ref{hardy domains}) in our framework.

\begin{theo} Assume that
\begin{eqnarray}
\int\,d\mu^c_{<\rho_F,f>}\geq 0,\ \forall\,0\leq f\in\calF_{\Om,\rm loc}.
\label{positive}
\end{eqnarray}
Then
\begin{eqnarray}
\frac{1}{2}\int_{\Om}f^2\,\frac{d\mu^c_{<\rho_F>}}{\rho_F^2}\leq 4\calE[f],\ \forall\,f\in\calF_{\Om}.
\end{eqnarray}
\label{strong1}
\end{theo}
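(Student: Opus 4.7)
The plan is to apply Theorem~\ref{Hardy} to the strongly local form $\calE_{\Om}$ with the test function $w:=\rho_F^{1/2}$, the measure $d\mu:=\frac{1}{2\rho_F^2}\,d\mu^c_{<\rho_F>}$, and the constant $C=4$; the conclusion of Theorem~\ref{Hardy} is then exactly the asserted inequality. Before checking~(\ref{MI}), I would set up the admissibility of $w$: because $F$ either equals $X\setminus\Om$ or is a closed subset of $\Om$ of zero capacity, the distance $\rho_F$ lies in $\calF_{\Om,{\rm loc}}\cap C(X)$, is strictly positive q.e.\ on $\Om$, and has a strictly positive lower bound on every compact subset of $\Om$. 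Lemma~\ref{improve} with $\phi(t)=t^{1/2}$ then gives $w\in\calF_{\Om,{\rm loc}}$ together with the chain-rule identity
\begin{equation*}
d\mu^c_{<w,g>}=\tfrac{1}{2}\,\rho_F^{-1/2}\,d\mu^c_{<\rho_F,g>},\qquad g\in\calF_{\Om,{\rm loc}},
\end{equation*}
while the standard fact that energy measures charge no polar set ensures that $\mu$ is a positive Radon measure charging no null-capacity set.

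The heart of the argument will be the verification of~(\ref{MI}). Since $\calE$ is strongly local the killing and jump contributions drop out, so I need to show, for every $0\le f\in\calF_{\Om,{\rm loc}}$,
\begin{equation*}
\tfrac{1}{2}\mu^c_{<w,f>}(X)-\tfrac{1}{4}\int wf\,d\mu \;=\; \tfrac{1}{4}\int \rho_F^{-1/2}\,d\mu^c_{<\rho_F,f>}\;-\;\tfrac{1}{8}\int f\rho_F^{-3/2}\,d\mu^c_{<\rho_F>}\;\ge\;0.
\end{equation*}
The idea is to rewrite $\rho_F^{-1/2}\,d\mu^c_{<\rho_F,f>}$ as a mutual energy measure of $\rho_F$ against the nonnegative function $f\rho_F^{-1/2}$. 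Combining the product formula~(\ref{product}) with Lemma~\ref{improve} applied to $\phi(t)=t^{-1/2}$ yields
\begin{equation*}
d\mu^c_{<f\rho_F^{-1/2},\rho_F>}\;=\;\rho_F^{-1/2}\,d\mu^c_{<f,\rho_F>}\;-\;\tfrac{1}{2}\,f\rho_F^{-3/2}\,d\mu^c_{<\rho_F>}.
\end{equation*}
Substituting this identity, the two $f\rho_F^{-3/2}\,d\mu^c_{<\rho_F>}$ contributions cancel and the expression collapses to $\tfrac{1}{4}\int d\mu^c_{<\rho_F,\,f\rho_F^{-1/2}>}$. Since $f\rho_F^{-1/2}\ge 0$ belongs to $\calF_{\Om,{\rm loc}}$, hypothesis~(\ref{positive}) renders this nonnegative, so~(\ref{MI}) holds with $C=4$, and Theorem~\ref{Hardy} delivers the claimed inequality.

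The main technical obstacle is the $\calF_{\Om,{\rm loc}}$-bookkeeping around the singular factors $\rho_F^{\pm 1/2}$: one must know that $\rho_F^{1/2}$ lies in $\calF_{\Om,{\rm loc}}$, that $\rho_F^{-1/2}$ is locally quasi-bounded and lies in $\calF_{\Om,{\rm loc}}$, and that $f\rho_F^{-1/2}$ stays in $\calF_{\Om,{\rm loc}}$ whenever $f$ does. This is precisely the content of Lemmas~\ref{improve} and~\ref{quasi-bounded}: the former handles the powers of $\rho_F$ by extending $\phi$ outside $[C_K,\infty)$ by a bounded $C^1$ function, and the latter provides the local positive lower bound on $\rho_F$ needed to define $\rho_F^{-1/2}$ quasi-everywhere. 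Once this is laid out the displayed identities are purely formal, and the proof is complete.
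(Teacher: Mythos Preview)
Your proof is correct and follows essentially the same route as the paper: both set $w=\rho_F^{1/2}$, reduce to verifying condition~(\ref{MI}) of Theorem~\ref{Hardy} with $C=4$ and $d\mu=\tfrac{1}{2}\rho_F^{-2}\,d\mu^c_{<\rho_F>}$, and use the chain rule plus product formula to collapse the left-hand side to $\tfrac{1}{4}\int d\mu^c_{<\rho_F,\,\cdot\,>}$ evaluated at a nonnegative element of $\calF_{\Om,{\rm loc}}$. The only cosmetic difference is that the paper first substitutes $f\mapsto wf$ and then applies hypothesis~(\ref{positive}) to $f$, whereas you keep $f$ and apply~(\ref{positive}) to $f\rho_F^{-1/2}$; these are the same substitution read in opposite directions.
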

For the gradient energy form on Euclidean domains, condition (\ref{positive}) expresses the fact that $\rho_F$  is superharmonic, under which the constant $C_{\Om}$ appearing in inequality (\ref{hardy domains}) may be chosen to be equal $4$. On the light of this observation, our extension seems to be quite natural.

\begin{proof} Set $w=\rho_F^{\frac{1}{2}}$. By Theorem\ref{Hardy}, it suffices to prove
\begin{eqnarray}
\frac{1}{2}\int_{\Om}\,d\mu^c_{<w,f>}-\frac{1}{8}\int_{\Om}wf\frac{d\mu^c_{<\rho_F>}}{\rho_F^2}\geq 0,\ \forall\,0\leq f\in\calF_{\Om,{\rm loc}},
\end{eqnarray}
or equivalently
\begin{eqnarray}
\frac{1}{2}\int_{\Om}\,d\mu^c_{<w,wf>}-\frac{1}{8}\int_{\Om}w^{-2}f\,d\mu^c_{<\rho_F>}\geq 0,\ \forall\,0\leq f\in\calF_{\Om,{\rm loc}}.
\end{eqnarray}
Let $0\leq f\in\calF_{\Om,{\rm loc}}$. Owing to the product formula together with the chain rule given by Lemma\ref{improve}, a straightforward computation yields
\begin{eqnarray}
\int_{\Om}\,d\mu^c_{<w,wf>}&=&\int_{\Om}f\,d\mu^c_{<w>}+\int_{\Om}w\,d\mu^c_{<w,f>}\nonumber\\
&&=\frac{1}{4}\int_{\Om}fw^{-2}d\mu^c_{<\rho_F>}+\frac{1}{2}\int_{\Om}d\mu^c_{<\rho_F,f>},
\end{eqnarray}
obtaining thereby
\begin{eqnarray}
\frac{1}{2}\int_{\Om}\,d\mu^c_{<w,wf>}-\frac{1}{8}\int_{\Om}fw^{-2}d\mu^c_{<\rho_F>}\geq 0,
\end{eqnarray}
which completes the proof.

\end{proof}

\begin{exa}{\rm Let $\Om$ be a convex subset of the Euclidean space $\R^d$ ($d\geq 3$ if $\Om$ is unbounded) and  $\varp$ a function s.t. $\varp>0$-q.e. on $\Om$ and $\varp,\varp^{-1}\in L_{\rm loc}^2(\Om,dx)$. We define the Dirichlet form on $L^2(\overline\Om,dx)$ by
\begin{eqnarray}
\calE[f]=\int_{\Om}|\nabla f|^2\varp^2\,dx,\ \forall\,f\in C_0^{\infty}(\Om),
\end{eqnarray}
and $\calF$ being the closure of $C_0^{\infty}(\Om)$ w.r.t. $\calE_1^{\frac{1}{2}}$. Then it is known that
\begin{eqnarray}
\rho(x,y)=|x-y|,\ \forall\,x,y\in\Om.
\end{eqnarray}
Set $F=\R^d\setminus\Om$. Assume that $\varp$ satisfies condition (\ref{positive}) which reads
\begin{eqnarray}
 -\Delta\rho_F-2\varp^{-1}\nabla\varp\nabla\rho_F\geq 0.
\end{eqnarray}
(It is the case if for example $\varp=\rho_F^{-\alp},\ \alp\geq 0$). Then  conditions of Theorem\ref{strong1} are fulfilled and we get
\begin{eqnarray}
\int_{\Om}\frac{f^2}{\rho_F^2}\,dx\leq 4\int_{\Om}|\nabla f|^2\varp^2\,dx,\ \forall\,f\in\calF.
\end{eqnarray}
}
\label{exp1}
\end{exa}

Another general example is the following

\begin{theo} Let $\psi\in\calF_{\rm loc}$ be s.t. $\psi>0$-q.e., $\frac{d\mu_{<\psi>}^c}{2dm}\leq 1$ and for some constant $C>1/2$
\begin{eqnarray}
\calE(\psi,f)\leq -2C\int \psi^{-1}f\,dm,\ \forall\, 0\leq f\in\calF_{\Om,\rm loc}.
\label{dominated energy}
\end{eqnarray}
Set $\beta:=C-\frac{1}{2}$. Then
\begin{eqnarray}
\int f^2\psi^{-2}\,dm\leq\beta^{-2}\calE[f],\ \forall\,f\in\calF_{\Om}.
\end{eqnarray}
\label{strong2}
\end{theo}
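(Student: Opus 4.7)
The plan is to apply Theorem~\ref{Hardy} with the weight $w := \psi^{-\beta}$ and the measure $d\mu := \psi^{-2}\,dm$, aiming for the Hardy constant $\beta^{-2}$. Since $\calE$ is strongly local, the jumping and killing parts vanish, so condition~(\ref{MI}) reduces to the inequality
\begin{eqnarray*}
\tfrac{1}{2}\int d\mu^c_{<\psi^{-\beta},\varp>} - \beta^{2}\int \varp\,\psi^{-\beta-2}\,dm \geq 0,
\quad \forall\,0\leq \varp\in\calF_{\Om,\rm loc}.
\end{eqnarray*}
Before unfolding this I would first check that $\psi^{-\beta}\in\calF_{\rm loc}$ and $w>0$-q.e.: by Lemma~\ref{quasi-bounded} the function $\psi^{-1}$ is locally quasi-bounded, which together with Lemma~\ref{improve} (applied to $\phi(t)=t^{-\beta}$) yields both $\psi^{-\beta},\psi^{-\beta-1}\in\calF_{\rm loc}$ and the chain rule
$d\mu^c_{<\psi^{-\beta},\varp>} = -\beta\,\psi^{-\beta-1}\,d\mu^c_{<\psi,\varp>}$.

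The core calculation combines the product formula~(\ref{product}) and the chain rule~(\ref{chainrule'}): for $\varp\geq 0$ in $\calF_{\Om,\rm loc}$ one rewrites
\begin{eqnarray*}
\psi^{-\beta-1}\,d\mu^c_{<\psi,\varp>} = d\mu^c_{<\psi,\psi^{-\beta-1}\varp>} + (\beta+1)\,\varp\,\psi^{-\beta-2}\,d\mu^c_{<\psi>}.
\end{eqnarray*}
Integrating and using $\calE^c(\psi,\cdot)=\tfrac12\mu^c_{<\psi,\cdot>}(X)$, I apply the dominated-energy hypothesis~(\ref{dominated energy}) with test function $h:=\psi^{-\beta-1}\varp\geq 0$ to bound the first piece by $-2C\int \varp\,\psi^{-\beta-2}\,dm$, and the density assumption $\tfrac12 d\mu^c_{<\psi>}\leq dm$ to bound the second piece by $(\beta+1)\int \varp\,\psi^{-\beta-2}\,dm$. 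This yields
\begin{eqnarray*}
\tfrac12\int \psi^{-\beta-1}\,d\mu^c_{<\psi,\varp>} \leq (-2C+\beta+1)\int \varp\,\psi^{-\beta-2}\,dm.
\end{eqnarray*}
With the choice $\beta=C-1/2$ the coefficient on the right equals exactly $-\beta$, so multiplying by $-\beta$ and recognising the left-hand side as $\tfrac12\int d\mu^c_{<\psi^{-\beta},\varp>}$ gives precisely the required inequality~(\ref{MI}) with Hardy constant $\beta^{-2}$, and Theorem~\ref{Hardy} concludes.

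The main obstacle is ensuring that all manipulations survive the fact that $\psi$ lies only in $\calF_{\rm loc}$: one must check that $\psi^{-\beta-1}\varp\in\calF_{\Om,\rm loc}$ so that the hypothesis~(\ref{dominated energy}) is applicable, and that the product and chain rules, stated in~(\ref{product})--(\ref{chainrule'}) for elements of $\calF_{\rm b,loc}$, can be applied here. Both are handled by the local quasi-boundedness of $\psi^{-1}$ (Lemma~\ref{quasi-bounded}) and the improved chain rule (Lemma~\ref{improve}), which between them allow one to realise $\psi^{-\beta-1}$ and $\psi^{-\beta}$ as elements of $\calF_{\rm b,loc}$ on each compact subset of $\Om$, so that each identity above is valid locally and, after truncating $\varp$ to have compact support (by regularity this suffices), globally.
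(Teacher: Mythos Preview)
Your proof is correct and follows essentially the same approach as the paper's own argument: both verify condition~(\ref{MI}) of Theorem~\ref{Hardy} with the weight $w=\psi^{-\beta}$ and the measure $d\mu=\psi^{-2}\,dm$, by applying the hypothesis~(\ref{dominated energy}) to a test function of the form $\psi^{-(\beta+1)}$ times the original one and combining with the product/chain rules and the bound $\tfrac12 d\mu^c_{<\psi>}\le dm$. The only difference is cosmetic: the paper parametrises the test function as $\psi^{-\beta}f$ (so that the substituted function is $\psi^{-2\beta-1}f$) and expands $\mu^c_{<\psi^{-\beta},\psi^{-\beta}f>}$ directly, whereas you keep the test function $\varp$ and expand $\mu^c_{<\psi^{-\beta},\varp>}$; the two calculations are related by $\varp=\psi^{-\beta}f$ and lead to the identical inequality.
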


\begin{proof} Let $0\leq f\in\calF_{\Om,\rm loc}$. Changing $f$ by $\psi^{-2\beta-1}f$ in inequality(\ref{dominated energy}) and applying the chain rule we achieve
\begin{eqnarray}
\frac{1}{2}\int\psi^{-2\beta-1}\,d\mu^c_{<\psi,f>}-\frac{1}{2}(2\beta+1)
\int\psi^{-2\beta-2}f\,d\mu^c_{<\psi>}\leq -2C\int\psi^{-2\beta-2}f\,dm.
\end{eqnarray}
Using the latter inequality together with the assumption $\frac{d\mu_{<\psi>}^c}{2dm}\leq 1$, we obtain
\begin{eqnarray}
\frac{1}{2}\int\,d\mu^c_{<\psi^{-\beta},\psi^{-\beta}f>}&&=
-\frac{\beta}{2}\int\psi^{-2\beta-1}\,d\mu^c_{<\psi,f>}
+\frac{\beta^2}{2}\int\psi^{-2\beta-2}f\,d\mu^c_{<\psi>}\nonumber\\
&&\geq 2C\beta\int\psi^{-2\beta-2}f\,dm
-\frac{\beta}{2}(2\beta+1)\int\psi^{-2\beta-2}\,d\mu^c_{<\psi>}\nonumber\\
&&+\frac{\beta^2}{2}\int\psi^{-2\beta-2}f\,d\mu^c_{<\psi>}\nonumber\\
&&\geq 2(\beta+\frac{1}{2})\beta\int\psi^{-2\beta-2}f\,dm
-\beta(\beta+1)\int\psi^{-2\beta-2}f\,dm.
\end{eqnarray}
Thus
\begin{eqnarray}
\frac{1}{2}\int\,d\mu^c_{<\psi^{-\beta},\psi^{-\beta}f>}-\beta^2
\int\psi^{-2\beta-2}f\,dm\geq 0,
\end{eqnarray}
and $w=\psi^{-\beta}$ satisfies condition(\ref{MI}), with $d\mu=\psi^{-2}dm$, which completes the proof.
\end{proof}

\begin{exa}{\rm We take an other time the Dirichlet form of Example\ref{exp1}, with $d\geq 3$. We suppose that $\Om$ is star-shaped around one of its points $x_0\in\Om$. We choose $F=\{x_0\}$ and assume that points have zero capacity. Then
$$
\rho(x):=\rho_F(x)=|x-x_0|.
$$
We choose $\psi(x)=\rho(x)$ and $\varp(x)=e^{\rho(x)}$. Then $\frac{d\mu_{<\psi>}^c}{2dm}\leq 1$.\\
On the other hand condition (\ref{dominated energy2}) reads
\begin{eqnarray}
d-1+2\rho(x)\geq 2Ce^{-2\rho(x)},
\end{eqnarray}
which is fulfilled with $C=\frac{d-1}{2}$. Thus we get
\begin{eqnarray}
\int_{\Om}\frac{f(x)^2}{|x-x_0|^2}e^{2|x-x_0|}\,dx\leq (\frac{d-2}{2})^{-2}\int_{\Om}|\nabla f(x)|^2e^{2|x-x_0|}\,dx,\ \forall\,f\in\calF.
\end{eqnarray}
}
\end{exa}

\begin{exa} {\rm We investigate in this example the Dirichlet form given by: Set $\sigma(x)=(1+|x|^2)^{\frac{1}{2}}$ and
\begin{eqnarray}
\calE[f]=\int_{\R^d}|\nabla f|^2\,dx+\int_{\R^d}f^2\sigma^{\lam}(x)\,dx,
\end{eqnarray}
considered on the space $L^2(\R^d,\sigma^{\lam}dx)$. In this situation the intrinsic metric is given by \cite{cipriani-grillo}
\begin{eqnarray}
\rho(0,x)=\ln(|x|+\sqrt{1+|x|^2}).
\end{eqnarray}
We set $\psi(x):=\rho(0,x),\ \forall\,x\in\R^d$ and suppose that $d\geq 3$.\\ From the property of the intrinsic metric we derive $\frac{d\mu_{<\psi>}^c}{2dm}\leq 1$. The second condition imposed on $\psi$ reads
\begin{eqnarray}
-\Delta\psi+\psi\sigma^{\lam}\leq -C\psi^{-1}\sigma^{\lam},
\end{eqnarray}
or equivalently
\begin{eqnarray}
\frac{d-1}{|x|}+\frac{d}{(1+|x|^2)^{3/2}}
&-&\ln(|x|+\sqrt{1+|x|^2})\sigma^{\lam}(x)\nonumber\\
&&\geq C\frac{\sigma^{\lam}(x)}{\ln(|x|+\sqrt{1+|x|^2})},\ \forall\,x\in\R^d\setminus\{0\},
\end{eqnarray}
with $C>1/2$. Obviously this condition can not be fulfilled if $\lam\geq 0$. However if $\lam<0$ and $-\lam$ is big enough then the latter condition is satisfied and we obtain for such $\lam$
\begin{eqnarray*}
\int_{\R^d} f^2(x)\ln^{-2}(|x|+\sqrt{1+|x|^2})\,dx\leq\beta_{\lam}^{-2}\big(\int_{\R^d}|\nabla f|^2\,dx+\int_{\R^d}f^2\sigma^{\lam}(x)\,dx\big),\ \forall\,f\in D(\calE).
\end{eqnarray*}

}
\end{exa}

The latter theorem may be improved in the following way

\begin{theo} Let $\psi\in\calF_{\rm loc}$ be s.t. $\psi>0$-q.e. and for some constant $C>1/2$
\begin{eqnarray}
\calE(\psi,f)\leq -C\int \psi^{-1}f\,d\mu_{<\psi>}^c,\ \forall\, 0\leq f\in\calF_{\Om,\rm loc}.
\label{dominated energy2}
\end{eqnarray}
Set $\beta:=C-\frac{1}{2}$. Then
\begin{eqnarray}
\frac{1}{2}\int f^2\psi^{-2}\,d\mu_{<\psi>}^c\leq\beta^{-2}\calE[f],\ \forall\,f\in\calF_{\Om}.
\end{eqnarray}
\label{strong3}
\end{theo}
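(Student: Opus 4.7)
The plan is to apply Theorem~\ref{Hardy} to the regular strongly local Dirichlet form $\calE_{\Om}$ with the weight $w:=\psi^{-\beta}$ and the target measure $d\mu:=\frac{1}{2}\psi^{-2}\,d\mu^c_{<\psi>}$, aiming for the constant $\beta^{-2}$. Since $\psi>0$ q.e.\ is locally quasi-bounded below by Lemma~\ref{quasi-bounded}, Lemma~\ref{improve} applies to $\phi(t)=t^{-\beta}$ and secures $w\in\calF_{\Om,\rm loc}$ together with the chain rule $d\mu^c_{<\psi^{-\beta},h>}=-\beta\psi^{-\beta-1}d\mu^c_{<\psi,h>}$. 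As the jumping and killing contributions vanish by strong locality, condition (\ref{MI}) reduces, after the change of test variable $g=\psi^{-\beta}f$, to the single inequality
\begin{eqnarray*}
\tfrac{1}{2}\int d\mu^c_{<\psi^{-\beta},\psi^{-\beta}f>}\geq \tfrac{\beta^2}{2}\int \psi^{-2\beta-2}f\,d\mu^c_{<\psi>},\quad\forall\,0\leq f\in\calF_{\Om,\rm loc}.
\end{eqnarray*}

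I would then expand the left hand side using the product formula~(\ref{product}) and the chain rule just recalled. A routine computation, entirely analogous to the first half of the proof of Theorem~\ref{strong2}, yields the identity
\begin{eqnarray*}
\tfrac{1}{2}\int d\mu^c_{<\psi^{-\beta},\psi^{-\beta}f>}=-\tfrac{\beta}{2}\int \psi^{-2\beta-1}\,d\mu^c_{<\psi,f>}+\tfrac{\beta^2}{2}\int \psi^{-2\beta-2}f\,d\mu^c_{<\psi>}.
\end{eqnarray*}
The second summand already matches the desired right hand side, so the whole verification is reduced to the nonpositivity claim $\int \psi^{-2\beta-1}\,d\mu^c_{<\psi,f>}\leq 0$ for every $0\leq f\in\calF_{\Om,\rm loc}$.

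This nonpositivity is exactly what hypothesis~(\ref{dominated energy2}) delivers when tested against $g:=\psi^{-2\beta-1}f\geq 0$, which belongs to $\calF_{\Om,\rm loc}$ by Lemma~\ref{improve}. Expanding $\calE_{\Om}(\psi,g)$ via the product formula produces
\begin{eqnarray*}
\tfrac{1}{2}\int \psi^{-2\beta-1}\,d\mu^c_{<\psi,f>}-\tfrac{2\beta+1}{2}\int \psi^{-2\beta-2}f\,d\mu^c_{<\psi>}\leq -C\int \psi^{-2\beta-2}f\,d\mu^c_{<\psi>},
\end{eqnarray*}
and since $C=\beta+\tfrac{1}{2}$ the coefficient $\tfrac{2\beta+1}{2}-C$ vanishes exactly, giving the required nonpositivity. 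Substituting this back verifies (\ref{MI}), and Theorem~\ref{Hardy} then delivers the stated inequality.

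The point I expect to be the main obstacle is precisely this exact cancellation. In Theorem~\ref{strong2} the hypothesis $\frac{d\mu^c_{<\psi>}}{2dm}\leq 1$ provided slack that allowed one to replace $d\mu^c_{<\psi>}$ by $2dm$ and absorb a lower order term into the reference measure; here no such slack is available, so the constant $C=\beta+\tfrac{1}{2}$ is dictated by the requirement that the cross term vanish at the level of $d\mu^c_{<\psi>}$ itself, and any other choice of exponent for $w$ would break the balance. A secondary technicality is ensuring that the various powers $\psi^{a}f$ stay in $\calF_{\Om,\rm loc}$ for the relevant real exponents $a$, but Lemma~\ref{improve} is tailor-made for this and no new ingredient beyond the Beurling--Deny calculus is needed.
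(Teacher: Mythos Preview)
Your proposal is correct and follows exactly the route the paper intends: the authors omit the proof with the remark that it ``runs as the previous one,'' and your argument is precisely the adaptation of the proof of Theorem~\ref{strong2} with $w=\psi^{-\beta}$, the test substitution $f\mapsto\psi^{-2\beta-1}f$ in hypothesis~(\ref{dominated energy2}), and the target measure $\tfrac{1}{2}\psi^{-2}\,d\mu^c_{<\psi>}$ in place of $\psi^{-2}\,dm$. Your observation about the exact cancellation $\tfrac{2\beta+1}{2}-C=0$ is the correct reason why the extra hypothesis $\tfrac{d\mu^c_{<\psi>}}{2dm}\leq 1$ of Theorem~\ref{strong2} is no longer needed here.
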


The proof runs as the previous one so we omit it.\\

\begin{rk}{\rm Inequality(\ref{dominated energy2}) is fulfilled with $C=1$ if
\begin{eqnarray}
\calE(\log\psi,f)\leq 0,\ \forall\, 0\leq f\in\calF_{\Om,\rm loc}.
\end{eqnarray}
}
\end{rk}

On the light of Theorems\ref{strong2}-\ref{strong3} and being inspired by a result due Filippas-Moschini-Tertikas \cite[Theorem3.2]{filippas},
 we shall improve, in some respect, the Hardy  inequality.

\begin{theo} Assume that conditions imposed on $\rho_F$ in Theorem\ref{strong1} and on $\psi$ in Theorem\ref{strong3}
are fulfilled. Then the following improved Hardy's
inequality
\begin{eqnarray}
\frac{1}{2}\int_{\Om}f^2\,\frac{d\mu^c_{<\rho_F>}}{\rho_F^2}\leq 4\big(\calE[f]
-\frac{\beta^2}{2}\int f^2\psi^{-2}\,d\mu_{<\psi>}^c\big),\ \forall\,f\in \calF_{\Om},
\label{ImHI}
\end{eqnarray}
holds true, provided
\begin{eqnarray}
\int_{\Om}\psi^{-2\beta}\,d\mu_{<\rho_F,f>}^c\geq 0\,\ \forall\,0\leq f\in \calF_{\Om,{\rm loc}}.
\label{joint condition}
\end{eqnarray}
\label{IHI1}
\end{theo}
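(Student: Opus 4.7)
My plan is to apply Theorem \ref{Hardy} with Hardy constant $1$, test function
\[
w := \rho_F^{1/2}\psi^{-\beta},
\]
and target measure $d\mu := \frac{1}{8}\rho_F^{-2}\,d\mu^c_{<\rho_F>} + \frac{\beta^2}{2}\psi^{-2}\,d\mu^c_{<\psi>}$, so that the desired bound (\ref{ImHI}) becomes exactly $\int f^2\,d\mu \leq \calE[f]$. This $w$ simply merges the two barriers $\rho_F^{1/2}$ and $\psi^{-\beta}$ used separately in Theorems \ref{strong1} and \ref{strong3}, and the weights $1/8$ and $\beta^2/2$ are forced by those two Hardy constants. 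Lemmas \ref{improve} and \ref{quasi-bounded} guarantee $w\in\calF_{\Om,\rm loc}$ with $w>0$-q.e., so the hypotheses of Theorem \ref{Hardy} are legitimate.

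As in the proof of Theorem \ref{strong1}, I replace $f$ by $wf$ in (\ref{MI}) and it then suffices to verify
\[
\frac{1}{2}\int d\mu^c_{<w,wf>}\;\geq\;\int w^2 f\,d\mu,\qquad \forall\,0\leq f\in\calF_{\Om,\rm loc}.
\]
Expanding the left side by the product rule (\ref{product}) as $\frac{1}{2}\int f\,d\mu^c_{<w>}+\frac{1}{2}\int w\,d\mu^c_{<w,f>}$ and then applying the chain rule of Lemma \ref{improve} to $w=\rho_F^{1/2}\psi^{-\beta}$ produces a linear combination of the five measures $d\mu^c_{<\rho_F>}$, $d\mu^c_{<\psi>}$, $d\mu^c_{<\rho_F,\psi>}$, $d\mu^c_{<\rho_F,f>}$ and $d\mu^c_{<\psi,f>}$. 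The normalization of $d\mu$ is chosen precisely so that the $d\mu^c_{<\rho_F>}$ and $d\mu^c_{<\psi>}$ diagonal contributions on the left cancel exactly against the two pieces of $\int w^2 f\,d\mu$ on the right. After this cancellation, and after fusing the two cross terms (one with $d\mu^c_{<\rho_F,\psi>}$, the other with $d\mu^c_{<\psi,f>}$) into the single quantity $\int \psi^{-2\beta-1}\,d\mu^c_{<\psi,\rho_F f>}$ via one more application of the product rule, the inequality to be proved reduces to
\[
\frac{1}{4}\int \psi^{-2\beta}\,d\mu^c_{<\rho_F,f>}\;-\;\frac{\beta}{2}\int \psi^{-2\beta-1}\,d\mu^c_{<\psi,\rho_F f>}\;\geq\;0.
\]

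The first summand is nonnegative by the joint hypothesis (\ref{joint condition}). For the second, I substitute the positive test function $g:=\psi^{-2\beta-1}\rho_F f$ into (\ref{dominated energy2}); expanding $\calE(\psi,g)=\frac{1}{2}\int d\mu^c_{<\psi,g>}$ via the product and chain rules and using $C=\beta+\frac{1}{2}$, the $d\mu^c_{<\psi>}$-contributions cancel exactly, leaving $\int \psi^{-2\beta-1}\,d\mu^c_{<\psi,\rho_F f>}\leq 0$, mirroring the key cancellation already employed in Theorem \ref{strong3}. Hence (\ref{MI}) holds and Theorem \ref{Hardy} yields (\ref{ImHI}). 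The principal difficulty is purely combinatorial: the five mutual energy measures must align so that the two diagonal pairs cancel cleanly while the two cross terms fuse through the product rule into the one quantity controlled by the substitution above; the joint hypothesis (\ref{joint condition}) is exactly the extra input needed to handle the residual $d\mu^c_{<\rho_F,f>}$-term that survives from the $\rho_F$-cross contribution.
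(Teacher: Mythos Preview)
Your proof is correct and follows essentially the same route as the paper: both arguments use the combined barrier $w=\rho_F^{1/2}\psi^{-\beta}$, expand via the product and chain rules so that the diagonal $d\mu^c_{<\rho_F>}$- and $d\mu^c_{<\psi>}$-terms cancel, handle the residual $d\mu^c_{<\rho_F,f>}$-term by the joint hypothesis (\ref{joint condition}), and dispose of the $d\mu^c_{<\psi,\rho_F f>}$-term by substituting into (\ref{dominated energy2}) with the exact cancellation $C=\beta+\tfrac12$. The only cosmetic difference is packaging: you verify condition (\ref{MI}) and invoke Theorem~\ref{Hardy}, whereas the paper performs the ground-state substitution $f=w_1w_2 g$ directly and checks positivity of $\calE[f]-\int f^2\,d\mu$ by hand.
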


\begin{proof} Set $w_1=\psi^{-\beta}$, $w_2=\rho_F^{\frac{1}{2}}$ and $f=w_1w_2g\in\calF_{\Om}$. Then
\begin{eqnarray}
\calE[w_1w_2g]&=&\frac{1}{2}\int\,d\mu_{<w_1w_2g>}^c=\frac{1}{2}\int(w_1w_2)^2\,d\mu_{<g>}^c
+\frac{1}{2}\int\psi^{-2\beta}g\,d\mu_{<g,\rho_F>}^c\nonumber\\
&&-\beta\int\psi^{-(2\beta+1)}\rho_Fg\,d\mu_{<g,\psi>}^c
-\frac{\beta}{2}\int\psi^{-(2\beta+1)}g^2\,d\mu_{<\rho_F,\psi>}^c
\nonumber\\
&&+\frac{1}{8}\int\psi^{-2\beta}g^2\rho_F^{-1}\,d\mu_{<\rho_F>}+
\frac{\beta^2}{2}\int\psi^{-2(\beta+1)}g^2\rho_F\,d\mu_{<\psi>}^c.
\end{eqnarray}

Yielding

\begin{eqnarray}
&\calE[w_1w_2g]&-\frac{\beta^2}{2}\int(w_1w_2)^2\psi^{-2}g^2\,d\mu_{<\psi>}^c
-\frac{1}{8}\int(w_1w_2)^2\rho_F^{-2}g^2\,d\mu_{<\rho_F>}^c=\nonumber\\
&& \frac{1}{2}\int(w_1w_2)^2\,d\mu_{<g>}^c
+\frac{1}{2}\int\psi^{-2\beta}g\,d\mu_{<g,\rho_F>}^c\nonumber\\
&&-\frac{\beta}{2}\int\psi^{-(2\beta+1)}\rho_F\,d\mu_{<g^2,\psi>}^c
-\frac{\beta}{2}\int\psi^{-(2\beta+1)}g^2\,d\mu_{<\rho_F,\psi>}^c
\end{eqnarray}

Observe that by assumptions the first two integrals in the latter equality are positive.
We shall prove that the remainder which we denote by $R$ is positive as well. We rewrite $R$
with the help of the product formula
\begin{eqnarray}
R=-\frac{\beta}{2}\int\psi^{-(2\beta+1)}\,d\mu_{<g^2\rho_F,\psi>}^c.
\end{eqnarray}
Owing to inequality (\ref{dominated energy2}), we achieve
\begin{eqnarray}
R&\geq& C\beta\int\psi^{-2\beta-2}g^2\rho_F\,d\mu_{<\psi>}^c
-\beta(\beta+1/2)\int\psi^{-2\beta-2}g^2\rho_F\,d\mu_{<\psi>}^c=0,
\end{eqnarray}

which was to be proved.

\end{proof}

We illustrate the improved Hardy's inequality by an example.

\begin{exa}{\rm We reconsider the Dirichlet form of Example\ref{exp1}.
We suppose that $\Om=B_R$, the open Euclidean ball centered at $0$ with radius $R>0$. We set
$$
\rho(x):=R-|x|,\ x\in B_R.
$$
We fix $\alp\in[0,1/2]$ and choose
\begin{eqnarray}
\varp(x)=\rho(x)^{-\alp}\ {\rm and}\ \psi(x)=|x|,\ \forall\,x\in B_R.
\end{eqnarray}
Then condition (\ref{dominated energy2}) imposed on $\psi$ reads
\begin{eqnarray}
\frac{1-d}{|x|}+\frac{2}{|x|}\varp^{-1}x\cdot\nabla\varp\leq -C\frac{1}{|x|}\varp^2,
\end{eqnarray}
which is always satisfied. However the condition $C>1/2$ is fulfilled if and only if
$$
(d-1)R^{2\alp}>1.
$$
Whence from now on we assume in this example that $d>1$ and $R$ satisfies the latter condition (big $R$).\\
The condition imposed on $\rho$ reads
\begin{eqnarray}
\frac{-1+d}{|x|}+2\alp\rho^{-1}(x)\geq 0, {\rm on}\ B_R,
\end{eqnarray}
which is always true.\\
Lastly the condition \ref{joint condition} imposed jointly on $\psi$ and $\rho$ reads
\begin{eqnarray}
-{\rm div}\big(\psi^{-2\beta}\rho^{-2\alp}\nabla\rho)\geq 0,
\end{eqnarray}
or equivalently
\begin{eqnarray}
\frac{2\beta}{|x|}+\frac{2\alp}{R-|x|}-\Delta\rho\geq 0,
\end{eqnarray}
which is always fulfilled.\\
Thus we get, with $\beta:=(d-1)R^{2\alp}-1/2$, for every $f\in W^1_0(B _R)$
\begin{eqnarray*}
\int_{B_R}\frac{f^2}{(R-|x|)^2}\,dx\leq
4\big( \int_{B_R}|\nabla{f}|^2(R-|x|)^{-2\alp}\,dx-
\beta^2\int_{B_R}\frac{f^2}{|x|^2}(R-|x|)^{-2\alp}\,dx\big).
\end{eqnarray*}

}
\end{exa}

Other conditions may also lead to an improved Hardy's inequality. Indeed, following the lines of the latter proof one get
\begin{prop} Assume that $\rho_F$ satisfies conditions of Theorem\ref{strong1}, that
$$\frac{1}{2}\frac{d\mu_{<\rho_F>}^c}{dm}=1,$$
and that $\psi$ satisfies conditions of Theorem\ref{strong2}. Then

\begin{eqnarray}
\int_{\Om}\frac{f^2}{\rho_F^2}\,dm\leq 4\big(\calE[f]
-\frac{\beta^2}{2}\int f^2\psi^{-2}\,d\mu_{<\psi>}^c\big),\ \forall\,f\in \calF_{\Om},
\label{ImHI2}
\end{eqnarray}
holds true, provided
\begin{eqnarray}
\int_{\Om}\psi^{-2\beta}\,d\mu_{<\rho_F,f>}^c\geq 0\,\ \forall\,0\leq f\in \calF_{\Om,{\rm loc}}.
\label{joint condition'}
\end{eqnarray}
\label{IHI2}

\end{prop}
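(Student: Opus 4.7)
The plan is to mimic, step for step, the argument used for Theorem~\ref{IHI1}, replacing the $d\mu^c_{<\psi>}$-based hypothesis of Theorem~\ref{strong3} by the $dm$-based one of Theorem~\ref{strong2}, and using the assumed equality $\tfrac{1}{2}d\mu^c_{<\rho_F>}=dm$ to turn the $\rho_F$-term into an integral against $dm$. Concretely, I would set $w_1:=\psi^{-\beta}$, $w_2:=\rho_F^{1/2}$, and work with $f:=w_1 w_2 g$ for $g\in\calF_{\Om,\rm loc}$ such that $f\in\calF_{\Om}$; as at the end of the proof of Theorem~\ref{Hardy}, the regularity of $\calE_{\Om}$ reduces matters to proving the inequality on such a dense class.

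Next I would expand $\calE[w_1w_2g]=\tfrac12\mu^c_{<w_1w_2g>}(\Om)$ using the product formula~(\ref{product}) together with the chain rule of Lemma~\ref{improve}; the latter applies because $\psi,\rho_F>0$-q.e., so by Lemma~\ref{quasi-bounded} both $\psi^{-1}$ and $\rho_F^{-1}$ are locally quasi-bounded. The resulting decomposition is exactly the one written out in the proof of Theorem~\ref{IHI1}. The only new feature is that the summand $\tfrac18\int\psi^{-2\beta}g^2\rho_F^{-1}\,d\mu^c_{<\rho_F>}$ now equals $\tfrac14\int f^2\rho_F^{-2}\,dm$ by the postulated equality. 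Among the remaining pieces, $\tfrac12\int\psi^{-2\beta}\rho_F\,d\mu^c_{<g>}\ge 0$ is trivial, while $\tfrac12\int\psi^{-2\beta}g\,d\mu^c_{<\rho_F,g>}=\tfrac14\int\psi^{-2\beta}\,d\mu^c_{<\rho_F,g^2>}\ge 0$ by the product formula and hypothesis~(\ref{joint condition'}) applied to $g^2$.

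The delicate step is to show that the remaining quantity $R:=-\tfrac{\beta}{2}\int\psi^{-2\beta-1}\,d\mu^c_{<g^2\rho_F,\psi>}$ is nonnegative. I would take $h:=g^2\rho_F\psi^{-2\beta-1}$ as a test function; by the chain and product rules one obtains $\int\psi^{-2\beta-1}\,d\mu^c_{<g^2\rho_F,\psi>}=2\calE(\psi,h)+(2\beta+1)\int g^2\rho_F\psi^{-2\beta-2}\,d\mu^c_{<\psi>}$. Hypothesis~(\ref{dominated energy}) then bounds the first summand by $-4C\int g^2\rho_F\psi^{-2\beta-2}\,dm$, and the bound $d\mu^c_{<\psi>}\le 2\,dm$ from Theorem~\ref{strong2} bounds the second by $2(2\beta+1)\int g^2\rho_F\psi^{-2\beta-2}\,dm$. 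With $C=\beta+\tfrac12$ these contributions cancel exactly, so the integral is $\le 0$ and $R\ge 0$. Reassembling yields $\calE[f]-\tfrac{\beta^2}{2}\int f^2\psi^{-2}\,d\mu^c_{<\psi>}\ge \tfrac14\int f^2\rho_F^{-2}\,dm$, which is precisely~(\ref{ImHI2}).

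The main obstacle I anticipate is the same bookkeeping issue that already lurked in Theorem~\ref{IHI1}: verifying that $h=g^2\rho_F\psi^{-2\beta-1}$ lies in the admissible class for~(\ref{dominated energy}) even where $\psi$ is small, and justifying the chain-rule identity for the singular power $\psi^{-2\beta-1}$. This is handled by the standard device of truncating $\psi$ to $\psi\vee(1/k)$, applying Lemma~\ref{improve} on each level set $\{\psi\ge 1/k\}$, and then passing to the limit by monotone convergence; the quasi-boundedness granted by Lemma~\ref{quasi-bounded} ensures the truncated quantities converge to the right targets.
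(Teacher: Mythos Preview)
Your proposal is correct and follows essentially the same route the paper indicates (``following the lines of the latter proof''): expand $\calE[w_1w_2g]$ via product and chain rules exactly as in Theorem~\ref{IHI1}, use the equality $\tfrac12\,d\mu^c_{<\rho_F>}=dm$ to convert the $\rho_F$-term into an $m$-integral, and handle the remainder $R$ with the $dm$-based hypothesis~(\ref{dominated energy}) together with $d\mu^c_{<\psi>}\le 2\,dm$. The cancellation with $C=\beta+\tfrac12$ that you compute is exactly the point, and your care about the admissibility of $h$ via truncation and Lemma~\ref{quasi-bounded} fills in a detail the paper leaves implicit.
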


Set ${\rm Cap}_{\Om}$, the capacity induced by $\calE_{\Om}$. In conjunction  with the equivalence between isocapacitary inequality and Hardy's inequality \cite{fitz,benamor-osaka} the latter proposition leads to the following lower estimate for the capacity of compact sets
\begin{eqnarray}
\int_{K}\frac{1}{\rho_F^2}\,dm+\frac{\beta^2}{2}\int_K \psi^{-2}\,d\mu_{<\psi>}^c\leq 4{\rm Cap}_{\Om}(K),\ \forall K\subset\Om,\ {\rm compact}.
\end{eqnarray}


\bibliography{bib-hardy}

\begin{thebibliography}{CMR94}

\bibitem[Ada73]{adams73}
David~R. Adams.
\newblock A trace inequality for generalized potentials.
\newblock {\em Studia Math.}, 48:99--105, 1973.

\bibitem[AH96]{adams}
David~R. Adams and Lars~Inge Hedberg.
\newblock {\em Function spaces and potential theory}.
\newblock Springer-Verlag, Berlin, 1996.

\bibitem[Anc86]{ancona}
Alano Ancona.
\newblock On strong barriers and an inequality of {H}ardy for domains in
  $\mathbb{\R}^n$.
\newblock {\em J.London Math.Soc.}, 2(34):274--290, 1986.

\bibitem[BA04]{benamor04}
Ali Ben~Amor.
\newblock Trace inequalities for operators associated to regular {D}irichlet
  forms.
\newblock {\em Forum Math.}, 16(3):417--429, 2004.

\bibitem[BA05]{benamor-osaka}
Ali Ben~Amor.
\newblock On the equivalence between trace and capacitary inequalities for the
  abstract space of {B}essel potentials.
\newblock {\em Osaka J.Math.}, 42:11--26, 2005.

\bibitem[CG98]{cipriani-grillo}
Fabio Cipriani and Gabriele Grillo.
\newblock ${L^p}$-exponentiel decay for solutions to functional equations in
  local {D}irichlet spaces.
\newblock {\em J.reine angew.Math.}, 496:163--179, 1998.

\bibitem[CMR94]{roeckner-homeo}
Z.Q. Chen, Zhi-Ming Ma, and Michael R\"ockner.
\newblock Quasi-homeomorphism of {D}irichlet forms.
\newblock {\em Nagoya Math.J.}, 136:1--15, 1994.

\bibitem[Dav89]{davies}
Eduard~B. Davies.
\newblock {\em Heat kernels and spectral theory}.
\newblock Cambridge University Press, Cambridge, 1989.

\bibitem[Fit00]{fitz}
P.J. Fitzsimmons.
\newblock {Hardy's inequality for Dirichlet forms.}
\newblock {\em J. Math. Anal. Appl.}, 250(2):548--560, 2000.

\bibitem[FLA07]{filippas}
Statis Filippas, Moschini Luisa, and Tertikas Achilles.
\newblock Sharp two-sided heat kernel estimates for critical {S}chr\"odinger
  operators on bounded domains.
\newblock {\em Comm.Math.Phys.}, 273:237--281, 2007.

\bibitem[F{\=O}T94]{fuku-oshima}
Masatoshi Fukushima, Y{\=o}ichi {\=O}shima, and Masayoshi Takeda.
\newblock {\em Dirichlet forms and symmetric {M}arkov processes}.
\newblock Walter de Gruyter \& Co., Berlin, 1994.

\bibitem[FU03]{fuku02}
Masatoshi Fukushima and Toshihiro Uemura.
\newblock Capacitary bounds of measures and ultracontractivity of time changed
  processes.
\newblock {\em J.Math.Pures Appl.}, 82:553--572, 2003.

\bibitem[Kai92]{kaimanovich}
Vadim~A. Kaimanovich.
\newblock Dirichlet norms, capacities and generalized isoperimetric
  inequalities for {M}arkov operators.
\newblock {\em Potential Anal.}, 1(1):61--82, 1992.

\bibitem[Kuw98]{kuwae}
Kazuhiro Kuwae.
\newblock {F}unctional calculus for {D}irichlet forms.
\newblock {\em Osaka J.Math.}, 35:683--715, 1998.

\bibitem[Maz85]{mazja85}
Vladimir~G. Maz'ja.
\newblock {\em Sobolev spaces}.
\newblock Springer-Verlag, Berlin, 1985.

\bibitem[Rv06]{rao}
Murali Rao and Hrvoje \v{S}iki\'c.
\newblock {P}otential-theoretic nature of {H}ardy's inequality for {D}irichlet
  forms.
\newblock {\em J.Math.Anal.Appl.}, 318:781--786, 2006.

\bibitem[Stu95]{sturm2}
Karl~T. Sturm.
\newblock Analysis on local {D}irichlet spaces-{II} . {Upper} {G}aussian
  estimates for the fundamental solutions of parabolic equations .
\newblock {\em Osaka J.Math.}, 32:257--312, 1995.

\bibitem[SV96]{stoll-voigt}
Peter Stollmann and J\"urgen Voigt.
\newblock Perturbation of dirichlet forms by measures.
\newblock {\em Potential Anal.}, 5:109--138, 1996.

\bibitem[Tid05]{tidblom}
Jesper Tidblom.
\newblock A {H}ardy inequality in the half-space.
\newblock {\em J.Funct.Anal.}, 221:482--495, 2005.

\bibitem[Von96]{vondracek}
Zoran Vondracek.
\newblock {An estimate for the $L\sp 2$-norm of a quasi continuous function
  with respect to a smooth measure.}
\newblock {\em Arch. Math.}, 67(5):408--414, 1996.

\end{thebibliography}

\end{document}